\newcommand{\bh}{{\bf h}}
\newcommand{\bxi}{{\mbox{\boldmath $\xi$}}}
\newcommand{\bzet}{{\mbox{\boldmath $\zeta$}}}
\newcommand{\bep}{{\mbox{\boldmath $\epsilon$}}}
\newcommand{\bud}{{\underline {\bf d}}}
\newcommand{\bd}{{\bf d}}
\newcommand{\bU}{{\bf U}}
\newcommand{\bE}{{\bf E}}
\newcommand{\bX}{{\bf X}}
\newcommand{\E}{I \! \! E}
\newcommand{\R}{I \! \! R}
\newcommand{\C}{I \! \! \! \! {C}}
\newcommand{\us}{{\underline s}}
\newcommand{\uc}{{\underline c}}
\newcommand{\uq}{{\underline q}}
\newcommand{\um}{{\underline m}}
\newcommand{\umu}{{\underline \mu}}
\newcommand{\uy}{{\underline y}}
\newcommand{\uw}{{\underline w}}
\newcommand{\ud}{{\underline d}}
\newcommand{\ue}{{\underline e}}
\newcommand{\tz}{{\tilde{z}}}
\newcommand{\p}{{p^*}}
\newcommand{\uxi}{{\underline{\xi}}}
\newcommand{\uzet}{{\underline{\zeta}}}
\newcommand{\tzet}{{\tilde{\zeta}}}
\newcommand{\ugam}{{\underline{\gamma}}}
\newcommand{\bb}{\begin{eqnarray}}
\newcommand{\be}{\end{eqnarray}}
\title{Kernel density estimation  via diffusion and the complex exponentials approximation problem}
\author{Piero Barone\thanks{Istituto per le Applicazioni del Calcolo ''M. Picone'',C.N.R., Via dei Taurini 19, 00185 Rome, Italy,({\tt p.barone@iac.cnr.it, piero.barone@gmail.com})
}}
\begin{document}

\maketitle

\begin{abstract}
A  kernel method is proposed to estimate the condensed density of the generalized eigenvalues of  pencils of Hankel matrices whose elements have
a joint noncentral Gaussian distribution with nonidentical covariance. These pencils arise when the complex exponentials approximation problem is considered in Gaussian noise.
Several moments problems can be formulated in this framework and the estimation of the condensed density above is the main critical step for their solution. It is shown that the condensed density satisfies approximately a
diffusion equation, which allows to estimate an optimal bandwidth. It is proved by simulation that good results can be obtained even when the signal-to-noise ratio is so small that other methods fail.

\end{abstract}

\begin{keywords}
 condensed density, random matrices, parabolic PDE
\end{keywords}

\begin{AMS}
62G07, 35K05, 65R10
\end{AMS}

\pagestyle{myheadings}
\thispagestyle{plain}

\section*{Introduction}
Many difficult moments problems such as the trigonometric,  the complex, the
Hausdorff ones can be formulated as the complex exponentials
approximation problem (CEAP), which can be stated as follows, denoting random variables by bold characters:
given  a uniformly sampled signal made up of a linear combination
of complex exponentials
\begin{eqnarray}s_k=\sum_{j=1}^\p c_j\xi_j^k.\label{modale}\end{eqnarray}
where $c_j,\xi_j\in\C,$ let us assume to know an even number $n=
2p,\;p\ge \p$ of noisy samples
$$\bd_k=s_k+\bep_k,\quad k=0,1,2,\dots,n-1$$ where $\bep_k$ is a complex Gaussian,
zero mean, white noise, with finite known variance $\sigma^2$.  The CEAP problem consists in  estimating  $\p,c_j,\xi_j,\;j=1,\dots,\p$. This is a well
known ill-posed inverse problem often addressed in the literature, see e.g.
\cite{donoho,gmv,hen2,osb,bardsp,barja2,barja,siam03,bama2,bama1,baram,maba2,maba1,vpb}.

We notice that, in the noiseless
case and when $p=\p$, the parameters $\xi_j$ are the generalized
eigenvalues of the pencil $(U_1,U_0)$ where $U_1$ and $U_0$ are Hankel matrices defined as
\bb
U_0=\left[\begin{array}{llll}
s_0 & s_{1} &\dots &s_{p-1} \\
s_{1} & s_{2} &\dots &s_{p} \\
. & . &\dots &. \\
s_{p-1} & s_{p} &\dots &s_{n-2}
  \end{array}\right],\;\;
U_1=\left[\begin{array}{llll}
s_1 & s_{2} &\dots &s_{p} \\
s_{2} & s_{3} &\dots &s_{p+1} \\
. & . &\dots &. \\
s_{p} & s_{p+1} &\dots &s_{n-1}
  \end{array}\right]\label{pencil}\be
If we define  $\bU_1$ and $\bU_0$ as $U_0$ and $U_1$ but starting from $\bd_k,\;k=0,\dots,n-1$
 it is evident that
  the generalized eigenvalues of the pencil $(\bU_1,\bU_0)$
provide information about the location in the complex plane of the
generalized eigenvalues $\xi_j,\;j=1\dots,p$ whose estimation is the most
difficult part of CEAP.

To make precise this statement, let us consider a $p\times p$ random matrix
$\bU$  and denote by  $\{\bxi_j,\;j=1,\dots,p\}$ its eigenvalues  which form a set of exchangeable random variables. Their marginal density $h(z),\;z\in \C$, also called condensed density
\cite{ham}, is the expected value of the
(random) normalized counting measure on the zeros of $\bU$ i.e.
$$h(z)= \frac{1}{p}E\left[\sum_{j=1}^{p}\delta(z-\bxi_j)\right]$$ or,
equivalently, for all Borel sets $A\subset\C$ $$\int_A
h(z)dz=\frac{1}{p}\sum_{j=1}^p Prob(\bxi_j\in A).$$
If a pencil $\bU=(\bU_1,\bU_0)$  of  random matrices is considered, the condensed density of its generalized eigenvalues can be computed by the formula above. Its relative maxima provide information about the location in the complex plane of the
generalized eigenvalues $\xi_j,\;j=1\dots,p$.
In \cite{barja2,bardsp,distrf} the use of this function for solving several moments problems was illustrated. However
when the  signal-to-noise ratio (SNR) measured e.g. by $SNR=\min_j\frac{|c_j|^2}{\sigma^2}$ is low the computation of the condensed density  is very
difficult  even if we assume to have replicated observations as we do in the following. The main problem is that many relative maxima related to noise are present in the condensed density which are slightly less intense than those related to the true signals. As we are interested only on signal related relative maxima of the condensed density we look for an estimation method with noise filtering abilities.

The aim of the paper is to show that  it is possible to devise a kernel density estimation method which does have such properties. In \cite{basisc} the same project was developed for the real exponentials approximation problem which is better known as the exponential analysis problem. The idea was to use a method proposed by Botev et al. \cite{botev}, based on a class of kernels which satisfy a diffusion PDE, which allows to automatically estimate the optimal bandwidth. It was shown that the generalized eigenvalues of the pencil $\bU$ can be approximated by the ratio of Gaussian variables whose density satisfies a specific diffusion PDE. In the limit for $SNR\downarrow\infty$ this PDE belongs to the class considered in \cite{botev} and their method can be used.

In the following a different approach is proposed. An explicit expression of the condensed density is derived and approximated by Laplace method. Then  an anisotropic diffusion PDE is derived which admits the approximated condensed density as a solution in the limit for $\sigma\downarrow 0$. Finally an optimal bandwidth is derived on the same lines of Botev et al. results. We can then propose  an estimator which has better filtering abilities than the standard one based on Gaussian kernel.

The paper is organized as follows. In Section 1 the condensed density of the generalized eigenvalues of $\bU$ is derived. In Section 2 its Laplace approximation is computed.  In
Section 3 the anisotropic diffusion PDE is derived. In Section 4 the kernel estimator and the optimal bandwidth are computed. In Section 5 the proposed algorithm is illustrated. Finally in Section 6 two numerical examples are discussed.
The Appendix contains  the proof of most  theorems and lemmas.

\section{The  condensed density of the generalized eigenvalues of $\bU$ }

From \cite[eq.(7)]{barja2} the condensed density of the generalized eigenvalues of $\bU$  is
$$
h_n(z,\sigma)=\frac{2}{n}E\left[\sum_{j=1}^{n/2}\delta(z-\bxi_j)\right]=
\frac{2}{n}\sum_{j=1}^{n/2}E\left[\delta(z-\bxi_j)\right]=\frac{2}{n}\sum_{j=1}^{n/2}h_n^{(j)}(z,\sigma)$$
where $$h_n^{(j)}(z,\sigma)=\frac{1}{(\pi\sigma^2)^{n}}
\int_{\C^{n/2-1}}\int_{\C^{n/2}}
J_C^*(\uzet^{(j)},z,\ugam)e^{-\frac{1}{\sigma^2}
\sum_{k=0}^{n-1}\left|\sum_{h\ne j}^{1,n/2} \gamma_h\zeta_h^{k}+\gamma_j
z^{k}-s_k\right|^2} d\uzet^{(j)}d\ugam $$
and
 $\uzet^{(j)}=\{\zeta_h,h\ne j\}$ and $$J_C^*(\uzet^{(j)},z,\ugam)=
\left\{\begin{array}{cc} \gamma & \mbox{ if } n=2\\
(-1)^{n/2}\prod_{ j=1}^{1,n/2}\gamma_j\prod_{r<h;r,h\ne
j}(\zeta_r-\zeta_h)^4\prod_{r\ne j}(\zeta_r-z)^4 & \mbox{ if } n\ge
4\end{array}\right.$$
In the following  Lemmas we give simpler forms and properties of the condensed density.
\begin{lemma}
\bb
h_n^{(j)}(z,\sigma)= \frac{1}{(\pi\sigma^2)^{n}}\int_{\C^{n/2-1}}\int_{\C^{n/2}}
J_C^*e^{-\frac{1}{\sigma^2}[(\ugam-\umu_j)^HQ_j(\ugam-\umu_j)+\nu_j]
} d\uzet^{(j)}d\ugam,  \nonumber\be
$$J_C^*=J_C^*(\uzet^{(j)},z,\ugam),\;\;\umu_j=\umu_j(\uzet^{(j)},z),\;\;Q_j=Q_j(\uzet^{(j)},z),\;\;\nu_j=\nu_j(\uzet^{(j)},z),$$
and
$$x_{hk}^{(j)}=\left\{\begin{array}{ll}
\zeta_h^{k-1}, & h\ne j\\
z^{k-1}, & h=j
\end{array}\right. $$
$$X_j(h,k)=\overline{x}_{hk}^{(j)},\;\;q_j=X_j\us,\;\;X_j\in\C^{n/2\times n},\;\;
Q_j=X_jX^H_j\in\C^{n/2\times n/2} ,$$
$$\umu_j=Q_j^{-1}\uq_j,\;\;\nu_j=\sum_{k=0}^{n-1}|s_k|^2-\umu_j^H Q_j \umu_j=\us^H(I_n-X_j^H(X_jX_j^H)^{-1}
X_j)\us.$$ \label{lem1}
\end{lemma}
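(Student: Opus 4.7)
The plan is to recognize the sum in the exponent as a squared Euclidean norm in $\ugam$ and then complete the square in the usual complex Gaussian way. First I would observe that, with $X_j\in\C^{n/2\times n}$ having entries $X_j(h,k)=\overline{x_{hk}^{(j)}}$ as defined in the statement, the $k$-th component of $X_j^H\ugam$ equals $\sum_{h\ne j}\gamma_h\zeta_h^{k-1}+\gamma_j z^{k-1}$. Consequently the sum $\sum_{k=0}^{n-1}|\sum_{h\ne j}\gamma_h\zeta_h^k+\gamma_j z^k-s_k|^2$ can be rewritten compactly as $\|X_j^H\ugam-\us\|_2^2$, where $\us=(s_0,s_1,\dots,s_{n-1})^T$.

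Next I would expand this squared norm as $\ugam^H Q_j\ugam-\ugam^H\uq_j-\uq_j^H\ugam+\us^H\us$, with $Q_j=X_jX_j^H$ and $\uq_j=X_j\us$, exactly as defined in the statement. Since $Q_j$ is Hermitian and, off a measure-zero subset of $(\uzet^{(j)},z)$ where the Vandermonde-like rows of $X_j$ degenerate, positive definite, I can complete the square: with $\umu_j:=Q_j^{-1}\uq_j$ one obtains the identity $\|X_j^H\ugam-\us\|_2^2=(\ugam-\umu_j)^H Q_j(\ugam-\umu_j)+\nu_j$ where $\nu_j:=\us^H\us-\uq_j^H Q_j^{-1}\uq_j$. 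Plugging this back into the exponent gives the claimed form of $h_n^{(j)}(z,\sigma)$.

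Finally I would verify that the two expressions for $\nu_j$ in the statement agree with this definition. The identity $\umu_j^H Q_j\umu_j=\uq_j^H Q_j^{-1}Q_jQ_j^{-1}\uq_j=\uq_j^H Q_j^{-1}\uq_j$ immediately gives $\nu_j=\sum_{k=0}^{n-1}|s_k|^2-\umu_j^H Q_j\umu_j$; substituting $\uq_j=X_j\us$ inside $\uq_j^H Q_j^{-1}\uq_j$ yields $\nu_j=\us^H(I_n-X_j^H(X_jX_j^H)^{-1}X_j)\us$, which is the projection onto the orthogonal complement of the row space of $X_j$.

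The whole argument is routine complex linear algebra, so I do not anticipate a serious obstacle. The only technical wrinkle is the invertibility of $Q_j$, but since $X_j$ has Vandermonde-type rows it has full row rank for almost every $(\uzet^{(j)},z)$; the exceptional set has Lebesgue measure zero and therefore does not affect the value of the integrals defining $h_n^{(j)}(z,\sigma)$.
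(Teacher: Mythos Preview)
Your proposal is correct and follows essentially the same approach as the paper: rewrite the exponent as $\|X_j^H\ugam-\us\|_2^2$ and complete the square in $\ugam$ to obtain $(\ugam-\umu_j)^H Q_j(\ugam-\umu_j)+\nu_j$ with the stated $Q_j,\umu_j,\nu_j$. The paper's own proof is in fact terser than yours, so your added verification of the two forms of $\nu_j$ and the remark on generic invertibility of $Q_j$ only make the argument more explicit.
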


\begin{lemma}
If
$$X(h,k)=\overline{\xi}_h^{k-1},\;\;\us=X^H\uc,\;\;\nu(\uxi)=\us^H(I_n-X^H(XX^H)^{-1}X)\us$$
then
$\nu(\uxi)=0.$ \label{cor1}
\end{lemma}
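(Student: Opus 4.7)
The plan is to recognize $\nu(\uxi)$ as the squared norm of the residual of an orthogonal projection and to observe that $\us$ already lies in the subspace being projected onto, so the residual vanishes identically.

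First I would set $M := X^H(XX^H)^{-1}X$ and note that $M$ is well-defined provided $XX^H$ is nonsingular. Since $X$ is the $n/2 \times n$ matrix with entries $X(h,k) = \overline{\xi}_h^{k-1}$, its conjugate transpose is essentially a Vandermonde matrix in the $\xi_h$'s; with $n/2 \le n$ and the $\xi_h$ mutually distinct (the generic case needed for the problem to be well-posed), $X$ has full row rank $n/2$, and hence $XX^H$ is invertible. A direct computation (or a standard linear-algebra fact) then gives $M^H = M$ and $M^2 = M$, so $M$ is the orthogonal projector in $\C^n$ onto the range of $X^H$, and $I_n - M$ is the complementary projector onto $(\mathrm{range}(X^H))^\perp = \ker(X)$.

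Next I would exploit the hypothesis $\us = X^H \uc$. This says precisely that $\us$ lies in $\mathrm{range}(X^H)$, so $M\us = \us$, equivalently $(I_n - M)\us = 0$. Substituting into the definition
$$\nu(\uxi) = \us^H(I_n - X^H(XX^H)^{-1}X)\us = \us^H (I_n - M)\us,$$
we get $\nu(\uxi) = \us^H \cdot 0 = 0$, as claimed.

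I do not expect a serious obstacle. The only technical point worth mentioning is the invertibility of $XX^H$: this is where the assumption that the $\xi_h$ are distinct (and $p \le n/2$, implicit in the setup from Lemma~\ref{lem1}) enters, via the Vandermonde structure of $X^H$. Once that is granted, the argument is a one-line projection identity, and no further computation is needed.
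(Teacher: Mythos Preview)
Your argument is correct and is essentially the same as the paper's: the paper substitutes $\us=X^H\uc$ and observes $\uc^H X(I_n-X^H(XX^H)^{-1}X)X^H\uc=0$ because $X-XX^H(XX^H)^{-1}X=0$, which is exactly your projection identity $(I_n-M)\us=0$ written out algebraically. Your added remark on the invertibility of $XX^H$ via the Vandermonde structure is a welcome clarification that the paper leaves implicit.
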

  \begin{proof} $\nu(\uxi)=\us^H(I_n-X^H(XX^H)^{-1}X)\us=\uc^HX(I_n-X^H(XX^H)^{-1}X))X^H\uc=0.$
\end{proof}

\begin{lemma}
  When $n=2$ the condensed density is given by
  $$h_2^{(1)}(z,\sigma)=e^{-\rho\frac{|z-\xi_1|^2}{1+|z|^2}}\left(\frac{\rho|1+\overline{z}\xi_1|^2}{\pi(1+|z|^2)^3} + \frac{1}{\pi(1+|z|^2)^2}  \right)$$
   where  $\rho=\frac{|c_1|^2}{\sigma^2}$ denotes the SNR. \label{cor2}
\end{lemma}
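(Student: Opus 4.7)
The plan is to specialize Lemma~\ref{lem1} to $n=2$, $j=1$ and evaluate the resulting one-dimensional complex Gaussian integral explicitly. Since $n/2=1$, the outer integral over $\uzet^{(j)}$ collapses (empty product), leaving a single complex variable $\gamma$ to integrate out. The Jacobian factor for $n=2$ enters as $|\gamma|^{2}$, and all the data appearing in Lemma~\ref{lem1} reduce to scalars: from $s_0=c_1$, $s_1=c_1\xi_1$ one reads off $X_1=[1,\overline z]\in\C^{1\times 2}$, so that $Q_1=1+|z|^{2}$, $q_1=X_1\us=c_1(1+\overline z\,\xi_1)$, and $\mu_1=Q_1^{-1}q_1=c_1(1+\overline z\,\xi_1)/(1+|z|^{2})$.

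Next I would simplify $\nu_1$. From its definition $\nu_1=\us^{H}\us-\mu_1^{H}Q_1\mu_1=|c_1|^{2}(1+|\xi_1|^{2})-|c_1|^{2}|1+\overline z\,\xi_1|^{2}/(1+|z|^{2})$. The key algebraic identity here is the chordal-distance relation
$$(1+|\xi_1|^{2})(1+|z|^{2})-|1+\overline z\,\xi_1|^{2}=|z-\xi_1|^{2},$$
which follows by direct expansion. This gives $\nu_1=|c_1|^{2}|z-\xi_1|^{2}/(1+|z|^{2})$, and hence $\nu_1/\sigma^{2}=\rho\,|z-\xi_1|^{2}/(1+|z|^{2})$, producing exactly the exponential prefactor in the statement.

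Then I would carry out the Gaussian integral. Translating $\gamma\mapsto\gamma+\mu_1$ and expanding $|\gamma+\mu_1|^{2}=|\gamma|^{2}+\overline\gamma\,\mu_1+\gamma\,\overline\mu_1+|\mu_1|^{2}$, the two linear-in-$\gamma$ terms vanish by rotational symmetry of $e^{-a|\gamma|^{2}}$ with $a=(1+|z|^{2})/\sigma^{2}$, and the remaining moments are the standard ones $\int_{\C}e^{-a|\gamma|^{2}}d\gamma=\pi/a$ and $\int_{\C}|\gamma|^{2}e^{-a|\gamma|^{2}}d\gamma=\pi/a^{2}$. Multiplying by the overall prefactor $1/(\pi\sigma^{2})^{2}$ and by $e^{-\nu_1/\sigma^{2}}$ gives two additive contributions: one proportional to $1/(1+|z|^{2})^{2}$ and one proportional to $|\mu_1|^{2}/(1+|z|^{2})$, i.e.\ (using $|\mu_1|^{2}=|c_1|^{2}|1+\overline z\,\xi_1|^{2}/(1+|z|^{2})^{2}$ and $\rho=|c_1|^{2}/\sigma^{2}$) the two terms $\rho|1+\overline z\,\xi_1|^{2}/[\pi(1+|z|^{2})^{3}]$ and $1/[\pi(1+|z|^{2})^{2}]$ of the target expression.

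The proof is therefore essentially a bookkeeping exercise: no probabilistic or analytic obstruction appears, and the only mildly nontrivial step is recognizing the chordal identity above, which is what couples the $(1+|z|^{2})^{-3}$ denominator in the answer to the ''geodesic'' form $|z-\xi_1|^{2}/(1+|z|^{2})$ inside the exponential. All other manipulations are standard complex Gaussian calculus on $\C$.
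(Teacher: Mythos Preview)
Your proposal is correct and follows essentially the same route as the paper's own proof: specialize Lemma~\ref{lem1} to $n=2$, read off the scalar quantities $Q_1=1+|z|^{2}$, $\mu_1$, $\nu_1$, evaluate the one--dimensional complex Gaussian second moment, and then use the identity $(1+|\xi_1|^{2})(1+|z|^{2})-|1+\overline z\,\xi_1|^{2}=|z-\xi_1|^{2}$ to simplify $\nu_1$ and $|\mu_1|^{2}$. The only cosmetic difference is that the paper first writes everything in terms of $s_0,s_1$ and $\xi=s_1/s_0$ and identifies $\xi=\xi_1$ at the end, whereas you substitute $s_0=c_1$, $s_1=c_1\xi_1$ from the outset.
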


\begin{lemma}
When $n>2$
\bb h_n^{(j)}(z,\sigma)= \int_{\R^{n-2}}f_j(\uzet^{(j)},z)\frac{\prod_{r<h;r,h\ne j}|\zeta_r-\zeta_h|^8\prod_{r\ne j}|\zeta_r-z|^8 }{(\pi\sigma^2)^{n/2}|\tilde{Q}_j(\uzet^{(j)},z)|^{\frac{1}{2}}} e^{-\frac{1}{\sigma^2}\nu_j(\uzet^{(j)},z)} d\Re{\uzet^{(j)}}d\Im{\uzet^{(j)}}
\nonumber\be
where
$$f_j(\uzet^{(j)},z,\sigma)=\E\left[\prod_{ i=1}^{1,n/2} \tilde{\ugam}^T A_i \tilde{\ugam}\right],\;\;A_i=I_2\otimes
\ue_i\ue_i^T$$
$\E$ denotes the expectation with respect to the Gaussian density $N\left(\tilde{\umu}_j,\Sigma_j\right)$ where $\Sigma_j=\frac{\sigma^2}{2}\tilde{Q}_j^{-1},$
$$\tilde{Q}_j=\left[\begin{array}{rr}
\Re{Q_j} &  -\Im{Q_j} \\
\Im{Q_j} & \Re{Q_j} \end{array}\right]$$
and $\tilde{\ugam},\;\tilde{\umu}_j$ are obtained by stacking the real and imaginary parts of $\ugam$ and $\umu_j$ respectively.\label{lem2}
\end{lemma}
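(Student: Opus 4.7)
The plan is to perform the inner integration over $\ugam$ in the expression from Lemma \ref{lem1} by converting the complex Gaussian integral into a real Gaussian expectation, and then to collect the $\uzet^{(j)}$-dependent factors to form the outer integrand. First I would factor the integrand of Lemma \ref{lem1}: the term $e^{-\nu_j/\sigma^2}$ and the Vandermonde-like products in $J_C^*$ depend only on $\uzet^{(j)}$ and $z$ and therefore pull outside the $\ugam$-integral. What remains inside is the polynomial factor $\prod_i|\gamma_i|^2$ (the $\ugam$-dependent piece of $J_C^*$) multiplied by $\exp(-(\ugam-\umu_j)^H Q_j(\ugam-\umu_j)/\sigma^2)$.

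Next I would pass to real coordinates. Stacking $\tilde\ugam=(\Re\ugam,\Im\ugam)^T\in\R^n$, the Hermiticity of $Q_j$ yields the identity
$$(\ugam-\umu_j)^H Q_j(\ugam-\umu_j)=(\tilde\ugam-\tilde\umu_j)^T\tilde{Q}_j(\tilde\ugam-\tilde\umu_j),$$
which recasts the exponent as that of an $n$-dimensional real Gaussian $N(\tilde\umu_j,\Sigma_j)$ with $\Sigma_j=(\sigma^2/2)\tilde{Q}_j^{-1}$. Its normalization is $(2\pi)^{n/2}|\Sigma_j|^{1/2}=(\pi\sigma^2)^{n/2}/|\tilde{Q}_j|^{1/2}$, using the standard relation $|\tilde{Q}_j|=|\det Q_j|^2$.

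The polynomial factor rewrites cleanly in the new coordinates: $|\gamma_i|^2=(\Re\gamma_i)^2+(\Im\gamma_i)^2=\tilde\ugam^T A_i\tilde\ugam$ with $A_i=I_2\otimes\ue_i\ue_i^T$. Thus the inner integral equals the above normalization times the Gaussian expectation $f_j(\uzet^{(j)},z)=\E[\prod_i\tilde\ugam^T A_i\tilde\ugam]$ taken under $N(\tilde\umu_j,\Sigma_j)$. Combining all pieces, the overall prefactor $1/(\pi\sigma^2)^n$ merges with the Gaussian normalization to give $1/((\pi\sigma^2)^{n/2}|\tilde{Q}_j|^{1/2})$, while the Vandermonde factors in $J_C^*$—properly interpreted as squared moduli, so that the condensed density emerges as a real and non-negative quantity—supply the terms $|\zeta_r-\zeta_h|^8$ and $|\zeta_r-z|^8$. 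The outer integral over $\C^{n/2-1}$ is then trivially rewritten as one over $\R^{n-2}$ in $(\Re\uzet^{(j)},\Im\uzet^{(j)})$.

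The main obstacle is the bookkeeping of the complex-to-real passage: checking the quadratic-form identity and the determinant relation $|\tilde{Q}_j|=|\det Q_j|^2$, verifying that the complex Gaussian in $\ugam$ exactly matches the real Gaussian $N(\tilde\umu_j,\Sigma_j)$ with the factor-of-$\tfrac12$ in $\Sigma_j$, and ensuring that the polynomial factor in $J_C^*$ is interpreted as $\prod_i|\gamma_i|^2$ rather than $\prod_i\gamma_i$ (which is the only consistent reading yielding a real non-negative density, and is confirmed by specializing to $n=2$ via Lemma \ref{cor2}). Once these identifications are in place, the Gaussian expectation structure falls out immediately and the stated formula follows.
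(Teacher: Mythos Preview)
Your overall strategy matches the paper's: separate the $\uzet^{(j)}$-dependent factors from the $\ugam$-integral, pass to real coordinates via $\tilde\ugam$ and $\tilde Q_j$, and identify the remaining inner integral as a Gaussian expectation of $\prod_i\tilde\ugam^TA_i\tilde\ugam$.

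The one genuine gap is precisely the step you flag yourself. The replacement of the factor $J_C^*$ in Lemma~\ref{lem1} by $\prod_i|\gamma_i|^2\prod_{r<h}|\zeta_r-\zeta_h|^8\prod_r|\zeta_r-z|^8$ is not a matter of ``interpretation'' to be justified a posteriori by non-negativity or by the $n=2$ special case; it follows from the change-of-variables formula. The integrals over $\C^{n/2-1}$ and $\C^{n/2}$ are Lebesgue integrals in real and imaginary parts, and the underlying map $\ud\mapsto(\uzet,\ugam)$, when expressed in real coordinates, has Jacobian $J_R=|J_C^*|^2$ (the standard fact that a holomorphic map with complex Jacobian $J_C$ has real Jacobian $|J_C|^2$). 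This is exactly what the paper invokes, and once you use it the squared moduli---hence $\prod_i|\gamma_i|^2$ and the eighth powers---appear automatically. The rest of your bookkeeping (the quadratic-form identity, the covariance $\Sigma_j=\tfrac{\sigma^2}{2}\tilde Q_j^{-1}$, the normalization constant, and $|\gamma_i|^2=\tilde\ugam^TA_i\tilde\ugam$) is correct.
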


\begin{lemma}
$$f_j(\uzet^{(j)},z,\sigma)=P_{n/2}(\sigma^{2},z)=\sum_{h=0}^{n/2}\beta_h(\uzet^{(j)},z)\frac{\sigma^{2h}}{2^h D_j^{n-h}}$$
where  $\beta_h(\uzet^{(j)},z)$ are positive polynomials and $D_j=\det(\tilde{Q}_j)$.
Moreover $$\beta_0(\uzet^{(j)},z)=\prod_{i=1}^{n/2}\tilde{\uq}_j^T\hat{Q}_jA_i\hat{Q}_j\tilde{\uq}_j$$
and $\beta_{n/2}(\uzet^{(j)},z)$ is the only coefficient that does not depend on $\us$. \label{lemf}
\end{lemma}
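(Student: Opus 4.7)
The plan is to start from the representation
$f_j = \E\left[\prod_{i=1}^{n/2}\tilde{\ugam}^T A_i\tilde{\ugam}\right]$
of Lemma~\ref{lem2}, with $\tilde{\ugam}\sim N(\tilde{\umu}_j,\Sigma_j)$ and $\Sigma_j=\frac{\sigma^2}{2}\tilde Q_j^{-1}$, and to compute this Gaussian moment explicitly via Isserlis' theorem. Setting $\hat Q_j := \mathrm{adj}(\tilde Q_j) = D_j\tilde Q_j^{-1}$---a polynomial in the entries of $\tilde Q_j$, hence in $\uzet^{(j)}$ and $z$---we have $\tilde{\umu}_j=\hat Q_j\tilde{\uq}_j/D_j$ and $\Sigma_j=\sigma^2\hat Q_j/(2D_j)$, which makes transparent that the only denominators produced will be powers of $D_j$. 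Substituting $\tilde{\ugam}=\tilde{\umu}_j+\tilde{\buw}$ with $\tilde{\buw}\sim N(0,\Sigma_j)$ and expanding each factor as $\tilde{\ugam}^T A_i\tilde{\ugam}=\tilde{\umu}_j^T A_i\tilde{\umu}_j+2\tilde{\umu}_j^T A_i\tilde{\buw}+\tilde{\buw}^T A_i\tilde{\buw}$ produces a polynomial in $\tilde{\buw}$ of total degree $n$.

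Next I would group the surviving contributions by the even degree $2h$ in $\tilde{\buw}$, for $h=0,1,\dots,n/2$ (odd degrees vanish under expectation). A monomial of this form carries $n-2h$ factors $\tilde{\umu}_j$, contributing $D_j^{-(n-2h)}$, and its Gaussian expectation splits via Wick's formula into $h$ pairings of $\tilde{\buw}$, each producing $\Sigma_j=\sigma^2\hat Q_j/(2D_j)$ and so a joint factor $\sigma^{2h}/(2^h D_j^{h})$. The two scalings combine to $\sigma^{2h}/(2^h D_j^{n-h})$, matching the prescribed denominator, and the remaining polynomial in $\uzet^{(j)},z$ (and $\us$) defines $\beta_h$. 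The two special cases follow directly: for $h=0$ every factor contributes the $\tilde{\umu}_j\tilde{\umu}_j$ piece, giving $\prod_i\tilde{\umu}_j^T A_i\tilde{\umu}_j$ which, after absorbing the $D_j$-factors against the prefactor $D_j^{-n}$, is exactly $\prod_i\tilde{\uq}_j^T\hat Q_j A_i\hat Q_j\tilde{\uq}_j$; for $h=n/2$ every factor must contribute the pure $\tilde{\buw}\tilde{\buw}$ quadratic (otherwise fewer than $n$ $\tilde{\buw}$-factors are produced), so $\tilde{\umu}_j$---and therefore $\tilde{\uq}_j=X_j\us$---never appears, yielding the stated $\us$-independence.

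The delicate step is the positivity claim for the intermediate $\beta_h$. My route is to recast the same moment in complex variables: write $\gamma_i = \mu_{j,i}+\delta_i$ with $\delta$ a centred circular complex Gaussian of Hermitian covariance $\sigma^2 Q_j^{-1}$, so that $f_j=\E[\prod_i|\gamma_i|^2]$, and apply the complex Wick formula to obtain
\[
 f_j = \sum_{k=0}^{n/2}\sigma^{2k}\sum_{|S|=|T|=k}\Bigl(\prod_{i\notin S}\mu_{j,i}\Bigr)\Bigl(\prod_{i\notin T}\overline{\mu}_{j,i}\Bigr)\,\mathrm{perm}\bigl((Q_j^{-1})[S,T]\bigr).
\]
The coefficient of $\sigma^{2k}$ is a Hermitian quadratic form in the vector $\bigl(\prod_{i\notin S}\mu_{j,i}\bigr)_{|S|=k}$ with Gram matrix $\{\mathrm{perm}((Q_j^{-1})[S,T])\}_{S,T}$; positive semi-definiteness of this Gram matrix, inherited from the Hermitian PSD matrix $Q_j^{-1}$ via the classical fact that permanents of PSD Hermitian matrices are non-negative, then yields $\beta_h\ge 0$. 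Verifying this Gram PSD property carefully is the only step I expect to require genuine work.
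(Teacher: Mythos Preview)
Your argument is correct and takes a genuinely different route from the paper's proof. The paper first standardises to unit covariance, writing $\uy_j=\Sigma_j^{-1/2}\tilde{\ugam}\sim N(\um_j,I)$ and $\tilde{\ugam}^TA_i\tilde{\ugam}=\uy_j^TB_i\uy_j$ with $B_i=\frac{\sigma^2}{2D_j}\hat Q_j^{1/2}A_i\hat Q_j^{1/2}$, and then invokes the recursion of Bao and Ullah for $\E[\prod_i\mathcal{Q}_i]$ to read off the polynomial structure in $\sigma^2$; positivity of the $\beta_h$ is simply asserted, and $\beta_0$, $\beta_{n/2}$ are identified from the low-order cases of the recursion. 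Your approach, by contrast, works directly with the shift $\tilde{\ugam}=\tilde{\umu}_j+\tilde{\buw}$ and Isserlis' theorem, which makes the grading in $\sigma^{2}$ and the denominator $D_j^{n-h}$ fall out from a simple degree count without any external recursion. The identifications of $\beta_0$ and of the $\us$-independence of $\beta_{n/2}$ are then immediate (and your homogeneity remark---$\beta_h$ has degree exactly $n-2h$ in $\tilde{\uq}_j$---cleanly shows that \emph{only} $\beta_{n/2}$ is $\us$-free).

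What you add beyond the paper is an actual argument for positivity. Your complex-Wick rewriting of the $\sigma^{2k}$ coefficient as a Hermitian form in the vector $(\prod_{i\notin S}\mu_{j,i})_{|S|=k}$ with Gram matrix $[\mathrm{perm}((Q_j^{-1})[S,T])]_{S,T}$ is exactly right; the ``genuine work'' you flag is the standard fact that the permanental compound of a Hermitian PSD matrix is PSD, which follows from the realisation of $\mathrm{perm}(A[S,T])$ as an inner product on the $k$-th symmetric tensor power (equivalently from Schur's product theorem iterated). So your route is more self-contained (no Bao--Ullah) and strictly more complete on the positivity claim; the paper's route has the virtue of giving a ready-made recursive formula for the $\beta_h$ should one want them explicitly.
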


\begin{corollary}  When $n>2$
\bb h_n^{(j)}(\tz,\sigma)= \int_{\R^{n-2}}g_j(\tilde{\uzet}^{(j)},\tz,\sigma) e^{-\frac{1}{\sigma^2}\nu_j(\tilde{\uzet}^{(j)},\tz)} d\tilde{\uzet}^{(j)}\label{eqdc}
\be
where $\tz$ and $\tilde{\uzet}^{(j)}$ are the vectors obtained by stacking the real and imaginary parts of $z$ and $\uzet^{(j)}$ respectively,  and $$ g_j(\tilde{\uzet}^{(j)},\tz,\sigma)=
\frac{1}{\sigma^{n}\pi^{n/2}}\sum_{k=0}^{n/2}\frac{\sigma^{2k}}{2^k}\beta_k(\tilde{\uzet}^{(j)},\tz)\frac{\prod_{r<h;r,h\ne j}|\tzet_r-\tzet_h|^8\prod_{r\ne j}|\tzet_r-\tz|^8 }{D_j^{n-k+\frac{1}{2}}(\tilde{\uzet}^{(j)},\tz)}$$
\label{lem3}
\end{corollary}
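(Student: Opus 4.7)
The statement to be proved is essentially a direct synthesis of Lemma~\ref{lem2} and Lemma~\ref{lemf}, expressed in real coordinates, so the plan is simply to combine the two formulas carefully and track the constants.

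First, I would take the integral representation of $h_n^{(j)}(z,\sigma)$ from Lemma~\ref{lem2} and substitute the polynomial expansion of $f_j(\uzet^{(j)},z,\sigma)$ from Lemma~\ref{lemf}. The substitution produces a sum of $n/2+1$ terms, each carrying a factor $\sigma^{2h}/(2^h D_j^{n-h})$ coming from $f_j$ and a factor $1/|\tilde{Q}_j|^{1/2}$ coming from the Lemma~\ref{lem2} representation. Using $|\tilde{Q}_j|=D_j$, those two factors combine into $\sigma^{2h}/(2^h D_j^{n-h+1/2})$, which matches the exponent of $D_j$ appearing in the statement of $g_j$.

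Next, I would handle the $\sigma$-prefactor: the Lemma~\ref{lem2} denominator $(\pi\sigma^2)^{n/2}$ rewrites as $\sigma^n\pi^{n/2}$, producing the overall $1/(\sigma^n\pi^{n/2})$ that is pulled out in front of the sum in $g_j$. The Vandermonde-type products $\prod_{r<h;\,r,h\ne j}|\zeta_r-\zeta_h|^8\prod_{r\ne j}|\zeta_r-z|^8$ depend only on moduli and are unchanged when the integration variables are rewritten as the real-imaginary stackings $\tilde{\uzet}^{(j)}$ and $\tz$; the polynomials $\beta_h$ are then viewed as functions of those real variables by the same identification, and similarly for $\nu_j$ and $D_j$.

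Finally, I would rewrite the complex Lebesgue measure $d\Re{\uzet^{(j)}}\,d\Im{\uzet^{(j)}}$ as the real measure $d\tilde{\uzet}^{(j)}$ on $\R^{n-2}$ (there are $n/2-1$ complex components, hence $n-2$ real ones), and collect all the terms under the integral sign. What remains is exactly the displayed formula for $h_n^{(j)}(\tz,\sigma)$ with the prescribed $g_j$. There is no real obstacle: the argument is bookkeeping, and the only subtle points are the identification $|\tilde{Q}_j|^{1/2}=D_j^{1/2}$ and the fact that all quantities in the integrand can be re-parametrized by real and imaginary parts without changing their values, which is precisely what is needed so that the exponential weight $e^{-\nu_j/\sigma^2}$ alone carries the $\sigma$-dependence inside the exponent.
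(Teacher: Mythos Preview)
Your proposal is correct and follows essentially the same approach as the paper: start from Lemma~\ref{lem2}, substitute the polynomial expansion of $f_j$ from Lemma~\ref{lemf}, use $|\tilde{Q}_j|=D_j$ to merge the determinant factors, split $(\pi\sigma^2)^{n/2}=\sigma^n\pi^{n/2}$, and finally pass to the stacked real variables by noting that the moduli $|\zeta_r-\zeta_h|$ and $|\zeta_r-z|$ are unchanged under that identification. The paper's proof is exactly this bookkeeping, carried out in two displayed lines.
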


\begin{corollary} When $n>2,\;\;\forall j=1,\dots,n/2$
\bb h_n^{(j)}(\tz,\infty)&=&h_n^{(1)}(\tz,\infty)= \nonumber \\ &&\frac{1}{(2\pi)^{n/2}}\int_{\R^{n-2}}\beta_{n/2}(\tilde{\uzet}^{(1)},\tz)\frac{\prod_{r<h;r,h\ne 1}|\tzet_r-\tzet_h|^8\prod_{r\ne 1}|\tzet_r-\tz|^8 }{D_1^{\frac{n+1}{2}}(\tilde{\uzet}^{(1)},\tz)}  d\tilde{\uzet}^{(1)}\nonumber\label{dcinf}.
\be
Moreover this is also the condensed density obtained when $\us=\underline{0}$ i.e. when $\uc=\underline{0}$ and it is circularly symmetric i.e. it depends only on $|\tz|^2$.
\label{cor4}
\end{corollary}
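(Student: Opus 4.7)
The plan is to deduce the statement directly from the preceding Corollary and Lemma \ref{lemf}, in three stages: pass to the $\sigma\to\infty$ limit, identify the limit with the $\us=\uzer$ case and with $h_n^{(1)}$ by exchangeability, and finally track the invariance under rotation of $z$.

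First, in the integral representation (\ref{eqdc}) the exponential factor $e^{-\nu_j/\sigma^{2}}\to 1$, while the $g_j$-sum
\[
\frac{1}{\sigma^{n}\pi^{n/2}}\sum_{k=0}^{n/2}\frac{\sigma^{2k}}{2^{k}}\beta_k(\tilde{\uzet}^{(j)},\tz)\,\frac{\prod_{r<h;r,h\ne j}|\tzet_r-\tzet_h|^{8}\prod_{r\ne j}|\tzet_r-\tz|^{8}}{D_j^{n-k+1/2}}
\]
is dominated as $\sigma\to\infty$ by the term $k=n/2$, whose $\sigma$-factor is $\sigma^{n}/\sigma^{n}=1$, the other terms decaying like $\sigma^{-2(n/2-k)}$. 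Passing the pointwise limit under the integral sign (by dominated convergence, since the full integrand is a probability density and hence integrable uniformly in large $\sigma$) yields the stated formula with prefactor $1/(2\pi)^{n/2}$.

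The same expression arises when $\us=\uzer$: then $\uq_j=X_j\us=\uzer$, so $\umu_j=\uzer$ and $\nu_j=0$; the Gaussian of Lemma \ref{lem2} becomes centered, and by Wick's theorem $\E[\prod_i\tilde{\ugam}^{T}A_i\tilde{\ugam}]$ reduces to a sum over pairings of the covariance $\Sigma_j=\frac{\sigma^{2}}{2}\tilde{Q}_j^{-1}$, producing only the highest-order term in $\sigma^{2}$. By Lemma \ref{lemf} this equals $\beta_{n/2}\sigma^{n}/(2^{n/2}D_j^{n/2})$, so (\ref{eqdc}) collapses to its $k=n/2$ contribution and reproduces $h_n^{(j)}(\tz,\infty)$; the $\sigma$-independence mirrors the scale invariance of the generalized eigenvalues of $(\bU_1,\bU_0)$ in the pure-noise regime. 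To obtain $h_n^{(j)}(\tz,\infty)=h_n^{(1)}(\tz,\infty)$, I would then note that each factor of the limiting integrand is symmetric in $\{\zeta_h:h\ne j\}$: the two Vandermonde products visibly are; $D_j=\det\tilde{Q}_j$ is invariant under row permutations of $X_j$; and $\beta_{n/2}$, being the pure-covariance Wick expansion of $\E[\prod_{i=1}^{n/2}\tilde{\ugam}^{T}A_i\tilde{\ugam}]$ with $A_i=I_2\otimes\ue_i\ue_i^{T}$ summed symmetrically over all $n/2$ rows, depends on $\uzet^{(j)}$ only through the symmetric object $\tilde{Q}_j$. Relabelling the dummy variables along any bijection between $\{1,\dots,n/2\}\setminus\{j\}$ and $\{1,\dots,n/2\}\setminus\{1\}$ then identifies the two integrals.

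For circular symmetry I would apply the substitution $\zeta_h\mapsto e^{i\theta}\zeta_h$, $z\mapsto e^{i\theta}z$. The Vandermonde factors are manifestly invariant, and $X_j\mapsto X_jD$ with $D=\mathrm{diag}(1,e^{-i\theta},\dots,e^{-i\theta(n-1)})$ unitary, so $Q_j=X_jX_j^{H}$, $D_j$ and $\beta_{n/2}$ are unchanged; the real Lebesgue measure on $\R^{n-2}$ is preserved by complex rotation, hence the integral depends on $\tz$ only through $|\tz|^{2}$. The main obstacle I anticipate is making the symmetry claim about $\beta_{n/2}$ fully airtight: a direct Wick expansion can be cumbersome, but an indirect route goes through the centered Gaussian moment generating function $\E[\exp(\tilde{\ugam}^{T}M\tilde{\ugam})]\propto\det(I-2\Sigma_jM)^{-1/2}$, which depends on $\uzet^{(j)}$ only via the symmetric $\tilde{Q}_j$; each $\beta_k$ then inherits this symmetry by differentiation in auxiliary parameters.
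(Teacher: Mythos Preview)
Your proof is correct and follows the same structure as the paper's: isolate the $k=n/2$ term of $g_j$ in the limit $\sigma\to\infty$, identify it with the $\us=\uzer$ case via Lemma~\ref{lemf}, and then argue $j$-independence and circular symmetry. The only difference is that the paper is much terser on the last two points---it simply invokes exchangeability of the generalized eigenvalues to conclude that the marginals $h_n^{(j)}$ coincide and cites \cite{barja} for the circular symmetry---whereas you supply self-contained arguments (the relabeling change of variables for the integrand, and the rotation $\zeta_h\mapsto e^{i\theta}\zeta_h$, $z\mapsto e^{i\theta}z$ together with $X_j\mapsto X_jD$ for $D$ unitary).
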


\section{The Laplace approximation of the condensed density}

The expression of the condensed density given in Corollary \ref{lem3} can be approximated in the limit for $\sigma\downarrow 0$ as follows
\begin{theorem}
In the limit for $\sigma\downarrow 0$, for $\tz\in {\cal N}_j$ where ${\cal N}_j$ is a neighbor of $\xi_j$ we have
\bb h_n^{(j)}(\tz,\sigma)\approx \hat{h}_n^{(j)}(\tz,\sigma)=\sigma^{-2}G_j(\hat{\uzet}^{(j)},\tz)e^{-\frac{1}{\sigma^2}\nu_j(\hat{\uzet}^{(j)},\tz)}\be
where
$$G_j(\hat{\uzet}^{(j)},\tz)=(2 \pi)^{\frac{n}{2}-1} |H_j(\hat{\uzet}_j,\tz)|^{-\frac{1}{2}}K_j(\hat{\uzet}_j,\tz)$$
$$K_j(\tilde{\uzet}^{(j)},\tz)=\frac{1}{\pi^{\frac{n}{2}}}\beta_0(\tilde{\uzet}^{(j)},\tz)\frac{\prod_{r<h;r,h\ne j}|\tzet_r-\tzet_h|^8\prod_{r\ne j}|\tzet_r-\tz|^8 }{D_j^{n+\frac{1}{2}}(\tilde{\uzet}^{(j)},\tz)}$$
$H_j$ is the Hessian of $\nu_j$
and $\hat{\uzet}_j$ is the unique minimum of $\nu_j(\tilde{\uzet}^{(j)},\xi_j)$ in a neighbor ${\cal D}_j$ of $\{\xi_h,\;h\ne j\}$ and $\beta_0(\tilde{\uzet}^{(j)},\tz)$ is given in Lemma \ref{lemf}.
\end{theorem}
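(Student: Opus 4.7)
The plan is to apply multivariate Laplace asymptotics as $\sigma \downarrow 0$ to the integral representation
\[
h_n^{(j)}(\tz,\sigma) = \int_{\R^{n-2}} g_j(\tilde{\uzet}^{(j)},\tz,\sigma)\, e^{-\nu_j(\tilde{\uzet}^{(j)},\tz)/\sigma^2}\, d\tilde{\uzet}^{(j)}
\]
furnished by Corollary \ref{lem3}. The first step is to locate the phase's minimizer. From the formula $\nu_j = \us^H(I_n - X_j^H(X_jX_j^H)^{-1}X_j)\us$ in Lemma \ref{lem1}, $\nu_j \ge 0$ is the squared norm of an orthogonal projection residual, so it vanishes precisely when $\us$ lies in the column span of $X_j^H$. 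By Lemma \ref{cor1}, at $\tz = \xi_j$ this happens at the tuple of remaining true eigenvalues $\{\xi_h : h \ne j\}$, identifying $\hat{\uzet}_j$ as the unique minimizer of $\nu_j(\cdot,\xi_j)$ on the neighborhood $\mathcal{D}_j$ that eliminates the ambiguity from permutations of the $\xi_h$.

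Next I would peel off the leading term from $g_j$. The expansion in Lemma \ref{lemf} reads $g_j = \sigma^{-n} K_j + O(\sigma^{2-n})$, where the $K_j$ written in the theorem statement is exactly the $k=0$ coefficient (the one carrying $\beta_0$ and $D_j^{n+1/2}$). After integration against $e^{-\nu_j/\sigma^2}$ the $k$-th term contributes at order $\sigma^{2k-2}$, so only $k=0$ matters at leading order. The standard multivariate Laplace formula in $\R^{n-2}$ with small parameter $\sigma^2$ then gives
\[
\int K_j\, e^{-\nu_j/\sigma^2}\, d\tilde{\uzet}^{(j)} \sim K_j(\hat{\uzet}_j,\tz)\,(2\pi\sigma^2)^{(n-2)/2}\,|H_j(\hat{\uzet}_j,\tz)|^{-1/2}\, e^{-\nu_j(\hat{\uzet}_j,\tz)/\sigma^2}.
\]
Multiplying by $\sigma^{-n}$ collapses the powers of $\sigma$ to $\sigma^{-2}$, and the constant $(2\pi)^{(n-2)/2} = (2\pi)^{n/2-1}$ matches that appearing in $G_j$, which completes the identification.

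The main obstacle is justifying the hypotheses under which Laplace's method is valid at the claimed rate: non-degeneracy of the Hessian $H_j$ at $\hat{\uzet}_j$ (so that $|H_j|\ne 0$ and the Gaussian approximation is quantitative), strict positivity $\nu_j \ge c > 0$ outside any neighborhood of the minimizer (to make the tails exponentially negligible), and $\beta_0(\hat{\uzet}_j,\tz)\ne 0$ so that the leading coefficient does not vanish. These should follow from differentiating the explicit projection formula for $\nu_j$ with respect to $\tzet_r$ and using that the $\xi_h$ are distinct, but the Hessian calculation is the technical core. A secondary subtlety is that the theorem evaluates $G_j$ and $\nu_j$ at the fixed point $\hat{\uzet}_j$ (the minimizer for $\tz=\xi_j$) while $\tz$ ranges over $\mathcal{N}_j$; the implicit function theorem supplies a smooth $\tz$-dependent minimizer that agrees with $\hat{\uzet}_j$ at $\tz=\xi_j$, and the resulting mismatch contributes only corrections of order $|\tz-\xi_j|^2$ in the exponent, which are absorbed into the $\sigma\downarrow 0$ approximation on a sufficiently small $\mathcal{N}_j$.
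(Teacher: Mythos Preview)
Your proposal is correct and follows essentially the same route as the paper: identify the minimizer of $\nu_j$ via Lemma~\ref{cor1}, keep only the $k=0$ term of the expansion in Corollary~\ref{lem3}, and apply the standard multivariate Laplace formula in $\R^{n-2}$ with $\lambda=\sigma^{-2}$. In fact you are more careful than the paper about the side conditions (non-degeneracy of $H_j$, non-vanishing of $\beta_0$, and the distinction between the fixed $\hat{\uzet}_j$ and the $\tz$-dependent minimizer), all of which the paper simply assumes without comment.
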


\section{The diffusion equation}

In \cite[Th.2]{barja2} it was proved that $h_n(z,\sigma)$ converges weakly to the positive measure $\frac{2}{n}\sum_{h=1}^p\delta(z-\xi_h)$ when $\sigma\downarrow 0$. By the definition of  $h_n^{(j)}(z,\sigma)$ the same proof implies that $h_n^{(j)}(z,\sigma)$ converges weakly to $\delta(z-\xi_j)$. Moreover from Corollary \ref{dcinf} we know that  for
 $\sigma\downarrow \infty$, $h_n^{(j)}(z,\sigma)$ converges to a density independent of $\uxi$. Therefore we can guess that when $\sigma$ moves from $0$ to $\infty$, $h_n^{(j)}(z,\sigma)$ diffuses from an atomic measure centered in $\xi_j$  to a measure circularly symmetric w.r. to zero.
We then look for a diffusion equation which admits $\hat{h}_n^{(j)}(z,\sigma)$ as solution for $\sigma \downarrow 0$.

\noindent Let us define $z=x+i y$ and consider the anisotropic diffusion $h_t=L[h]$ where
\bb L[h](x,y,t)=\mbox{div}\left[{a_j(x,y)\nabla{\left(\frac{h(x,y,t)}{p(x,y)}\right)}}\right]\label{diff}\be
and $p(x,y)=h_n^{(j)}(x,y,\infty)$ is the stationary probability density, and $a_j(x,y)>0$ is the unknown diffusion coefficient.
Substituting $h(x,y,t)$ in the equation above with $\hat{h}_n^{(j)}(x,y,\sigma),\;\;\;t=\sigma^2$ where now $\sigma$ is considered as a variable (not a fixed known value) and dropping the indices $j,n$ and the variables $(x,y)$ we get
   \bb\frac{C e^{-\frac{\nu }{t}}}{p^3}\left( \frac{E_1}{t}+\frac{E_2}{t^2}+\frac{E_3}{t^3}\right)=0
   \be
where
\bb E_1=-a_y G_y p^2-a_x G_x
   p^2+a_y G p_y p+a_x G p_x p+2
   a p \left(G_y p_y+G_x p_x\right)-\nonumber\\
   a
   \left(G_{yy}+G_{xx}\right) p^2+a G
   \left(p_{yy}+p_{xx}\right) p-2 a G
   \left(p_y^2+p_x^2\right)\nonumber\be
\bb E_2=a_y G p^2 \nu_y+a_x
   G p^2 \nu_x+2 a G_y p^2 \nu_y+2 a
   G_x p^2 \nu_x-2 a G p_y p \nu_y-\nonumber\\
   2
   a G p_x p \nu_x+a G p^2 \left(\nu
   _{yy}+\nu _{xx}\right)-G p^3\nonumber\be
  $$E_3=-a G p^2 \nu_y^2-a G p^2 \nu_x^2+G
   p^3 \nu $$
   In the limit for $t \downarrow 0$ the dominant term in the left side of the equation is $\frac{C e^{-\frac{\nu }{t}}}{p^3} \frac{E_3}{t^3}$, therefore the equation is approximately satisfied when $E_3=0$ or, equivalently when
   $$a(x,y)=\frac{p(x,y) \nu (x,y)}{\nu _{x}(x,y)^2+\nu _{y}(x,y)^2}>0.$$

   In the following Lemma we prove that the Csisz\'ar  distance between $h(x,y,t)$ and $p(x,y)$ defined as
   $${\cal D} (h,p)=\int_{\R^2} p(x,y)\Psi\left(\frac{h(x,y,t)}{p(x,y)}\right)dxdy,\;\;\Psi\in {\cal C}^2:\R^+\rightarrow\R^+,\;\;\Psi''(\cdot)>0,\;\;\Psi'(1)=0 $$
   is a monotonic decreasing function of $t$, therefore $h(x,y,t)$ tends monotically to $p(x,y)$ when $t\downarrow\infty$.
   \begin{lemma}
   $$\frac{\partial {\cal D} (h,p)}{\partial t}=-\int_{\R^2}a(x,y)\Psi''\left(\frac{h(x,y,t)}{p(x,y)}\right)\left\|\nabla\left(\frac{h(x,y,t)}{p(x,y)}\right)\right\|_2^2dxdy<0.$$
   \end{lemma}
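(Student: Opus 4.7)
The plan is to compute $\partial_t \mathcal D(h,p)$ by differentiating under the integral sign and then performing a single integration by parts against the diffusion operator $L$.

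First I would write
\[
\frac{\partial \mathcal D(h,p)}{\partial t}=\int_{\R^2} p(x,y)\,\Psi'\!\left(\frac{h(x,y,t)}{p(x,y)}\right)\frac{h_t(x,y,t)}{p(x,y)}\,dxdy=\int_{\R^2}\Psi'\!\left(\frac{h}{p}\right)h_t\,dxdy,
\]
since $p$ does not depend on $t$. Then I would substitute the diffusion equation $h_t=L[h]=\mathrm{div}\!\left[a\,\nabla(h/p)\right]$ into the right‑hand side.

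Next I would integrate by parts in $\R^2$: using $\mathrm{div}(\Psi'(h/p)\,a\,\nabla(h/p))=\Psi'(h/p)\,\mathrm{div}(a\,\nabla(h/p))+a\,\nabla\Psi'(h/p)\cdot\nabla(h/p)$ and assuming that $a\,\Psi'(h/p)\,\nabla(h/p)$ decays sufficiently fast so that the boundary terms at infinity vanish (this is the standing regularity assumption on the class of kernels considered), I would obtain
\[
\int_{\R^2}\Psi'\!\left(\frac{h}{p}\right)\mathrm{div}\!\left[a\,\nabla\!\left(\frac{h}{p}\right)\right]dxdy=-\int_{\R^2}a\,\nabla\Psi'\!\left(\frac{h}{p}\right)\cdot\nabla\!\left(\frac{h}{p}\right)dxdy.
\]
Applying the chain rule $\nabla\Psi'(h/p)=\Psi''(h/p)\,\nabla(h/p)$ turns the integrand into $a\,\Psi''(h/p)\,\|\nabla(h/p)\|_2^2$, which yields precisely the claimed identity.

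Finally, the strict negativity follows from the pointwise signs: $a(x,y)>0$ by construction in Section 3, $\Psi''>0$ by the hypothesis on $\Psi$, and $\|\nabla(h/p)\|_2^2\ge 0$ with strict inequality on a set of positive measure as long as $h/p$ is not constant (which corresponds to $h\ne p$, i.e.\ the nonstationary regime). The only step with any subtlety is the justification of the boundary terms in the integration by parts; the rest is a mechanical calculation. I would address this obstacle by invoking the rapid Gaussian‑type decay of $\hat h_n^{(j)}(z,\sigma)$ established by the Laplace approximation in the previous section, which ensures that both $h/p$ and its gradient decay fast enough for the divergence theorem on expanding disks to have vanishing boundary contribution in the limit.
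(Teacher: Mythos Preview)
Your argument is essentially identical to the paper's: differentiate under the integral, substitute $h_t=\mathrm{div}[a\nabla(h/p)]$, integrate by parts, and apply the chain rule to $\nabla\Psi'(h/p)$. The one place where you and the paper diverge is the justification for killing the boundary term. You appeal to Gaussian-type decay of $\hat h_n^{(j)}$ so that ``$h/p$ and its gradient decay fast enough''; the paper instead invokes the hypothesis $\Psi'(1)=0$ directly. The paper's route is the more natural one here: since $p$ is the stationary density for the diffusion, one expects $h/p\to 1$ at infinity (not $h/p\to 0$), so it is $\Psi'(h/p)\to\Psi'(1)=0$ that makes the flux $\Psi'(h/p)\,a\,\nabla(h/p)$ vanish on large circles, rather than decay of $h/p$ itself. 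Your mechanical steps are correct; just replace the decay heuristic for the boundary contribution with the $\Psi'(1)=0$ condition, which is precisely why that hypothesis is included in the definition of the Csisz\'ar distance.
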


\section{The kernel estimator}
   \noindent Given a sample of size $R$ of the data $$\bud^{(r)}=[\bd^{(r)}_1,\dots,\bd^{(r)}_n],\:\:r=1,\dots,R$$ where $\E[\bd^{(r)}_k]=s_k$, we consider the kernel estimator of
$h^{(j)}_n(z,t)$ with bandwidth $t$ given by
$$\bh^{(j)}_n(x,y,t)=\frac{1}{R}\sum_{r=1}^R\Phi_j(x,y,\bzet_j(r);t)$$
where $\Phi_j(x,y,\bzet_j(r);t)$ is a solution of the diffusion equation (\ref{diff}) at time $t$ with initial condition $\delta(z-\bzet_j(r))$
and $\bzet_j(r),\;j=1,\dots,n/2$ are the generalized eigenvalues obtained from the data $\bud^{(r)}$.
Hence $\bh^{(j)}_n(x,y,t)$ is a solution of the diffusion equation (\ref{diff}) at time $t$ with initial condition
\bb\bE_j(x,y)=\frac{1}{R}\sum_{r=1}^R\delta(z-\bzet_j(r))\label{ic}\be
Therefore $\bE_j(x,y)$ is the empirical distribution of the generalized eigenvalue $\bzet_j$ and
$$\E[\bE_j(z)]=\frac{1}{R}\sum_{r=1}^R\int_{\C}\delta(z-\zeta) h^{(j)}_n(\zeta)d\zeta=h^{(j)}_n(z)$$
where $h^{(j)}_n(z)$ is the unknown true density corresponding to the known fixed value of $\sigma$.

\noindent In order to find the optimal bandwidth  we need the form of the kernel $\Phi_j(x,y,\bud^{(r)};t)$
for $t\downarrow 0$. By construction, dropping the index  $r$,  we have
$$\Phi_j(x,y,\zeta;t)=\hat{h}^{(j)}_n(x,y,t)=t^{-1}G_j(x,y,\zeta)e^{-\frac{1}{t}\nu_j(x,y,\zeta)},\;\;t\downarrow 0.$$
Associated to the anisotropic diffusion \ref{diff} there exists a Markov process $\bX_t$ whose transition probabilities are given, when $t\downarrow 0$, by $\Phi_j(z,\zeta;t)$. Moreover when the initial density of $\bX_0$ is $\delta(z-\zeta)$, the density of $\bX_t$ is (\cite[eq.(5.1)Ch.X.5]{fel2})
$$\Phi_j(z,\zeta;t)=\int_{\C }\Phi_j(z,u;t)\delta(u-\zeta)du$$
Therefore the kernel  $\Phi_j(z,\zeta;t),\;\;\forall \zeta$ and $t>0$, satisfies the forward equation
\bb\left\{\begin{array}{ll}
\frac{\partial \Phi_j}{\partial t}(z,\zeta;t)-L[\Phi_j(z,\zeta;t)]=0\label{forw}\\
\Phi_j(z,\zeta;0)=\delta(z-\zeta)
\end{array}\right. .\be
Moreover the conditional expectation of $\delta(u-\bX_t)$ on the hypothesis that $\bX_0=z$ is (\cite[eq.(4.5)Ch.X.4]{fel2})
$$\Phi_j(z,\zeta;t)=\int_{\C }\Phi_j(u,\zeta;t)\delta(u-z)du$$
and therefore the kernel  $\Phi_j(z,\zeta;t),\;\;\forall z$ and $t>0$, satisfies the backward equation
\bb\left\{\begin{array}{ll}
\frac{\partial \Phi_j}{\partial t}(z,\zeta;t)-L^*[\Phi_j(z,\zeta;t)]=0\label{backw}\\
\Phi_j(z,\zeta;0)=\delta(z-\zeta)
\end{array}\right.\be
where
$$L[\Phi_j(z,\zeta;t)]=\mbox{div}_z\left[a_j(z)\nabla_z\left(\frac{\Phi_j(z,\zeta;t)}{p_j(z)}\right)\right]$$ i.e.  $$L[\Phi_j(z,\zeta;t)]=a_j(z)\Delta_z\left[\frac{\Phi_j(z,\zeta;t)}{p_j(z)}\right]+(\nabla_z a_j(z))^T\nabla_z \left[ \frac{\Phi_j(z,\zeta;t)}{p_j(z)}\right]$$ and
$$L^*[\Phi_j(z,\zeta;t)]=\Delta_\zeta\left[a_j(\zeta)\frac{\Phi_j(z,\zeta;t)}{p_j(\zeta)}\right]-\mbox{div}_\zeta\left[\frac{\Phi_j(z,\zeta;t)}{p_j(\zeta)}\nabla_\zeta a_j(\zeta)\right]$$
is the adjoint operator of $L$ and
$\mbox{div}_z$ and $\nabla_z$ denote respectively the divergence and  the gradient operators w.r. to the variable $z$.

\noindent The mean integrated squared error (MISE) criterion to determine an optimal bandwidth $t$ is given, dropping the index $n$, by
$$MISE_{\bh^{(j)}}(t)=\E_{h^{(j)}}\int_{\C}[\bh^{(j)}(z,t)-h^{(j)}(z,t)]^2dz$$
or $$MISE_{\bh^{(j)}}(t)=\int_{\C}[\E_{h^{(j)}}\{\bh^{(j)}(z,t)\}-h^{(j)}(z)]^2dz+
\int_{\C}Var_h[\bh^{(j)}(z,t)]dz.$$
Following Botev et al. \cite{botev} we have
\begin{theorem}
$$\int_{\C}[\E_{h^{(j)}}\{\bh^{(j)}(z,t)\}-h^{(j)}(z)]^2dz\approx t^2\| L[h^{(j)}]\|^2$$
$$\int_{\C}Var_h[\bh^{(j)}(z,t)]dz\approx \frac{1}{2Rt}\E_{h^{(j)}}[G_j].$$

\end{theorem}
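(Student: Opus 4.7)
The plan is to split the MISE into its squared bias and integrated variance and analyse each by a small-$t$ expansion of the kernel. Since the replicates $\bzet_j(r)$, $r=1,\dots,R$, are i.i.d.\ with density $h_n^{(j)}$, the expected value of the estimator is
$$\E_{h^{(j)}}[\bh^{(j)}(z,t)] = \int_{\C}\Phi_j(z,\zeta;t)\,h^{(j)}(\zeta)\,d\zeta =: u(z,t).$$
Integrating the forward equation (\ref{forw}) against $h^{(j)}(\zeta)$, which is independent of $z$, shows that $u$ solves $\partial_t u = L[u]$ with initial datum $u(z,0)=h^{(j)}(z)$. A first-order Taylor expansion in $t$ at $t=0^{+}$ then yields
$$\E_{h^{(j)}}[\bh^{(j)}(z,t)]-h^{(j)}(z) = t\,L[h^{(j)}](z) + O(t^{2}),$$
and squaring and integrating over $\C$ produces the claimed bias contribution $t^{2}\|L[h^{(j)}]\|^{2}$.

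For the integrated variance, the independence of the replicates gives
$$\int_{\C} Var_{h^{(j)}}[\bh^{(j)}(z,t)]\,dz = \frac{1}{R}\int_{\C}\!\!\int_{\C}\Phi_j^{2}(z,\zeta;t)\,h^{(j)}(\zeta)\,d\zeta\,dz - \frac{1}{R}\int_{\C}\big(\E_{h^{(j)}}[\bh^{(j)}(z,t)]\big)^{2}dz.$$
The second term is bounded as $t\downarrow 0$, hence $O(1/R)$, and is dominated by the first, which I will show is $O(1/(Rt))$. Inserting the small-$t$ form $\Phi_j(z,\zeta;t)=t^{-1}G_j(z,\zeta)e^{-\nu_j(z,\zeta)/t}$ from Section~2 and applying Laplace's method to the inner $z$-integral: since $\nu_j(z,\zeta)$ has a non-degenerate minimum at $z=\zeta$ with $\nu_j(\zeta,\zeta)=0$ and positive definite $z$-Hessian $H(\zeta)$, the two-dimensional Gaussian evaluations give
$$\int_{\C}e^{-\nu_j(z,\zeta)/t}\,dz \sim \frac{2\pi t}{\sqrt{\det H(\zeta)}}, \qquad \int_{\C}e^{-2\nu_j(z,\zeta)/t}\,dz \sim \frac{\pi t}{\sqrt{\det H(\zeta)}}.$$
Because $\Phi_j$ is the transition density of the Markov process $\bX_t$, one has $\int_{\C}\Phi_j(z,\zeta;t)\,dz=1$ for all $\zeta$, which pins down the diagonal value of the prefactor as $G_j(\zeta,\zeta)=\sqrt{\det H(\zeta)}/(2\pi)$. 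Combining this with the second Gaussian integral yields $\int_{\C}\Phi_j^{2}(z,\zeta;t)\,dz \sim G_j(\zeta,\zeta)/(2t)$, and integrating against $h^{(j)}(\zeta)$ produces $\frac{1}{2Rt}\E_{h^{(j)}}[G_j]$ as claimed.

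The main technical obstacle is the rigorous control of the small-$t$ asymptotics. The Laplace form of $\Phi_j$ of Section~2 is only accurate to leading order, so one must check that the subleading corrections to $G_j$ and $\nu_j$ contribute $o(t^{2})$ after squaring and integrating in the bias, and $o(1/t)$ after integrating in the variance. The bias identity $\partial_t u(z,0^{+})=L[h^{(j)}](z)$ also requires the semigroup generated by the anisotropic operator $L$ of (\ref{diff}) to act smoothly enough on $h^{(j)}$ for the expansion to hold in $L^{2}(\C)$, which relies on the regularity of $a_j$, $p$ and $h^{(j)}$ obtained earlier. Given these uniformity checks, the argument is the anisotropic counterpart of the derivation of Botev et al.\ \cite{botev}.
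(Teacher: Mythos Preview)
Your argument is correct and follows the same overall decomposition as the paper, but the execution differs in both halves. For the bias you invoke the forward equation in $z$ to conclude directly that $u(z,t)=\E_{h^{(j)}}[\bh^{(j)}(z,t)]$ solves $\partial_t u=L[u]$ and then Taylor-expand; the paper instead applies the backward equation $\partial_t\Phi_j=L^*[\Phi_j]$ in the variable $\zeta$, moves $L^*$ onto $h^{(j)}$ by the adjoint relation, and only then lets $\Phi_j\to\delta$. Your route is shorter; the paper's makes explicit use of the duality set up in Section~4. For the variance you swap the order of integration, evaluate the inner $z$-integral by Laplace's method, and pin down $G_j(\zeta,\zeta)$ through the normalisation $\int_{\C}\Phi_j\,dz=1$. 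The paper integrates in $\zeta$ first: it writes $\Phi_j^2=\frac{1}{2t}G_j\cdot\frac{2}{t}G_je^{-2\nu_j/t}$ and invokes the weak convergence $\frac{2}{t}G_je^{-2\nu_j/t}\to\delta(z-\zeta)$ from \cite[Th.2]{barja2}, obtaining $\E_{h^{(j)}}[\Phi_j^2]\approx\frac{1}{2t}G_j(z,z)h^{(j)}(z)$ pointwise before integrating over $z$. Both computations yield the same $\frac{1}{2Rt}\E_{h^{(j)}}[G_j]$; yours is self-contained but needs the extra assumption that $\nu_j(\cdot,\zeta)$ has a nondegenerate minimum at $z=\zeta$, whereas the paper outsources that fact to the cited convergence result.
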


\noindent The MISE is then
$$MISE_{\bh^{(j)}}(t)\approx t^2\| L[h^{(j)}]\|^2+\frac{1}{2Rt}\E_{h^{(j)}}[G_j]$$
which has a unique real positive minimum in
\bb  t_j=  \sqrt[3]{  \frac{\E_{h^{(j)}}[G_j]}{4 R\| L[h^{(j)}]\|^2 }}.\label{ob}\be

\noindent The optimal kernel  estimator of $h^{(j)}_n(z,t)$ is then given by
$$\bh^{(j)}_n(z,t_j)=\frac{1}{R}\sum_{r=1}^R\Phi_j(z,\bzet_j(r);t_j)$$
and the optimal kernel estimator of $h_n(z,t)$ is  given by
\bb\bh_n(z,t_1,\dots,t_{n/2})=\frac{2}{n}\sum_{j=1}^{n/2}\bh^{(j)}_n(z,t_j)\label{oke}\be

\section{The algorithm}

In the following we assume that the generalized eigenvalues $\zeta_j^{(r)},\;j=1\dots n/2$  of the pencils  $[U_1^{(r)},U_0^{(r)}]$ - where the Hankel matrices $U_0^{(r)},U_1^{(r)}$ are based on $d_k^{(r)}$ - have been computed for each  $r=1,\dots,R$ and clustered in such a way that for each $j=1\dots n/2$ the $j-th$ cluster is the set $\{\zeta_j^{(r)},\;r=1,\dots,R\}$ whose elements are independent realization of the r.v. $\bzet_j$. The k-means method \cite{kmean} can be used to solve the clustering problem.

\noindent In order to use the optimal kernel estimator given in eqs. \ref{ob} and \ref{oke}, for each $j=1\dots n/2$ and $r=1,\dots,R$  we need  to solve the initial value problem obtained by eq. \ref{diff} in the limit for $\sigma\downarrow 0$ with initial condition given in eq. \ref{ic}.
By using the transformation $\tilde{h}(x,y,t)=\frac{h(x,y,t)}{p(x,y,t)}$ the equation $h_t=L[h]$ can be rewritten as
$\tilde{h}_t=\frac{1}{p}\mbox{div}\left[{a\nabla{\left(\tilde{h}\right)}}\right]$. Therefore the initial value problems are
\bb\left\{\begin{array}{ll}\frac{\partial\Phi_{jr}}{\partial t}=\frac{1}{p}\left(\Delta \Phi_{jr} + a_x\frac{\partial \Phi_{jr}}{\partial x} + a_y\frac{\partial\Phi_{jr}}{\partial y}\right) \\
       \Phi_{jr}(x,y,0)=\frac{E_j(x,y)}{p(x,y)}\end{array}\right. \label{adiff}\be
where $\Delta$ is the Laplacian operator, $E_j(x,y)$ is the empirical distribution of the generalized eigenvalue $\bzet_j$ and
$$a(x,y;j,r)=\frac{p(x,y) \nu_{jr} (x,y)}{\left(\frac{\partial\nu_{jr}}{\partial x}(x,y)\right)^2+\left(\frac{\partial\nu_{jr}}{\partial y} (x,y)\right)^2},\;\;\nu_{jr}(x,y)=\hat{\us}^H(I_n-X_{jr}^H(X_{jr}X_{jr}^H)^{-1}
X_j)\hat{\us} $$ where
$$\hat{\us}=\frac{1}{R}\sum_{r=1}^R\ud^{(r)},\;\;X_{jr}(h,k)=\overline{x}_{hk}^{(jr)}),\;\;
x_{hk}^{(jr)}=\left\{\begin{array}{ll}
\left(\zeta_h^{(r)}\right)^{k-1}, & h\ne j\\
z^{k-1}, & h=j
\end{array}\right. .$$
The density $p(x,y)=h_n^{(j)}(x,y,\infty)$ is circularly symmetric (Cor. \ref{cor4}). Moreover a closed form model of its modulus for each $n$ can be found in \cite{barja}.

The initial value problems were solved by a collocation method described in \cite{were} in a $m_x\times m_y$ non-uniform grid. For each $(j,r)$ the solution was approximated by the tensor product of Chebyshev polynomials in each spatial variable.  Fast Fourier transform was used to compute the spatial derivatives. The resulting non-linear ODE system
$$ \uw'(t)=F(t,\uw(t)),\;\;\uw(0)=\ue,\;\;\uw(t),\ue\in \R^{m_x m_y}$$
was then solved by MATLAB's built-in function  $\mit ode45.m$,
where  $F$ is the discretized right-hand side of eq. \ref{adiff} and $\ue$ is the discretization of $E_j(x,y)$. The method is fast and stable provided that some spurious oscillations of the spatial derivatives close to the border of the integration region are filtered out. This task is accomplished by multiplying the derivatives by the function
$$
F(x,y)=[ \arctan\{(\tilde{x}+ \gamma)/\phi\}-\arctan\{(\tilde{x} - \gamma)/\phi\} ] [ \arctan\{(\tilde{y} + \gamma)/\phi\}-\arctan\{(\tilde{y} -\gamma)/\phi\} ] $$
where
$$\tilde{x}=\pi (x - x_{min})/(x_{max} - x_{min}) - \pi/2,\;\;
\tilde{y}=\pi (y - y_{min})/(y_{max} - y_{min}) - \pi/2$$ and $\gamma$ and $\phi$ are suitable positive parameters.

\noindent To compute the optimal bandwidth,  $\E_{h^{(j)}}[G_j]$ is estimated by the sample mean of $G_j(\zeta,z)$ i.e. if the computed generalized eigenvalues are denoted by $\zeta_j^{(r)}$ then
$$\E_{h^{(j)}}[G_j]\approx\frac{1}{R}\sum_{r=1}^R\int_{\R^2} G_j(\zeta_j^{(r)},x,y)\hat{h}_n^{(j)}\left(\zeta_j^{(r)},x,y\right)dx dy\approx$$
$$\frac{t_j}{R}\sum_{r=1}^R\sum_{h=1}^{m_x}\sum_{k=1}^{m_y} \left[\Phi_{jr}(x_h,y_k,t_j)\right]^2e^{\frac{\nu_{jr}\left(\zeta_j^{(r)},x_h,y_k\right)}{t_j}}\delta_x(h)\delta_y(k)$$
where  $\delta_x(h)=x_h-x_{h-1},\;\;\delta_x(1)=\delta_x(2),$ and $\delta_y(k)=y_k-y_{k-1},\;\;\delta_y(1)=\delta_y(2)$.
Moreover $$\| L[h_n^{(j)}]\|^2 \approx\left\| \frac{\partial\Phi_{jr}}{\partial t}\right\|^2 \approx\frac{1}{R}\sum_{r=1}^R \sum_{h=1}^{m_x}\sum_{k=1}^{m_y}\left[\frac{\partial\Phi_{jr}(x_h,y_k,\zeta_j^{(r)},t_j)}{\partial t}\right]^2\delta_x(h)\delta_y(k)$$
   To compute $\E_{h^{(j)}}[G_j]$ and $\| L[h^{(j)}]\|^2$  we first need an estimate of $t_j$ which can be provided e.g.  by the variance $\hat{t}_j$ of the generalized eigenvalues in each cluster.

\section{Numerical results}

In order to appreciate the advantages of the proposed kernel estimator, two  numerical experiments were performed.
$R=10$ independent
  realizations $d_k^{(r)}=s_k+\epsilon_k^{(r)},\;\;k=1,\dots,n,\;\;r=1,\dots,R$ of the r.v.  $\bd_k$ were
  generated from the complex exponentials model with $\p=5$ components given by
$$\underline{\xi}=\left[ e^{-0.1-i 2\pi  0.3},e^{-0.05-i 2\pi
0.28},e^{-0.0001+i 2\pi 0.2},e^{-0.0001+i 2\pi  0.21},e^{-0.3-i 2\pi
0.35}\right]$$ $$ \underline{c}=\left[ 6,3,1,1,20\right
],\;\;n=74,\;p=37,\;\sigma=1,\;\sigma=3.$$   We notice that  the frequencies of the $4^{rd}$
and $5^{th}$ components are closer than the Nyquist frequency
if $n<1/(0.21-0.20)=100$. Therefore a super-resolution problem has to be solved. To speed up the computations  we limit the analysis of the condensed density to two regions containing respectively the first and second components and the third and fourth ones. The fifth component is isolated with a large amplitude therefore it is easy to identify even if its decay is fast. The considered regions are the rectangles defined by   $\Omega=\Omega_x\times\Omega_y$ where $\Omega_x=[-0.8,0.4],\;\;\Omega_y=[-1.4,-0.4]$ for the first region  and $\Omega_x=[-0.1,0.6],\;\;\Omega_y=[0.5,1.3]$ for the second one. A mesh of size $m_x=64,\;m_y=64$ was considered. The values  $\gamma=1.6,\;\;\phi=0.02 $ of the derivatives filter were selected by trials and errors.

In order to apply the proposed method a  pilot density estimate was first computed  by  the closed form approximation method given in \cite{distrf}
$$\hat{h}(z)\propto
\sum_{r=1}^N\sum_{k=1}^p\hat{\Delta}
\left\{\Psi\left[\left(\frac{R_{kk}^{(r)}(z)^2}{\sigma^2\beta}+1\right)\right]\right\}
$$ where $R^{(r)}(z)$
is the $R-$ factor of  the QR factorization of the matrix $U_1^{(r)}-zU_0^{(r)}$ and $\beta=5 n\sigma^2$ \cite[Prop.6]{dsp10}.
Then the  generalized eigenvalues of the pencils $[U_1^{(r)},U_0^{(r)}],\;r=1,\dots,R$ were pooled and the k-means method of clustering was  applied with the number of clusters equal to the number of relative maxima of the pilot estimate.

For comparison a Gaussian kernel estimate of density was also computed  by Algorithm 1 in \cite[App.E]{botev}.

In figure \ref{fig1} the results obtained for the first region when $\sigma=1$ are plotted:  the empirical density (top left),  the pilot density computed by the closed form approximation method (top right),  the Gaussian kernel  estimate (bottom left) and the result obtained by the proposed method (bottom right). In figure \ref{fig2} the same results are plotted for the second region. In figures \ref{fig3} and \ref{fig4} the results obtained when $\sigma=3$ are reported.
The positions of the true complex exponentials are marked by a cross. It can be noticed that the proposed method is able to identify the two true complex exponentials even in the worst case, filtering out most of the spurious peaks of the empirical condensed density. Even if the location of the peaks is not perfect, it is the only method which is able to provide a reasonable solution to the super-resolution problem  in the second region also for the smallest SNR considered.


\begin{figure}[H]
\hspace*{-0.5in}
\includegraphics[totalheight=6.in]{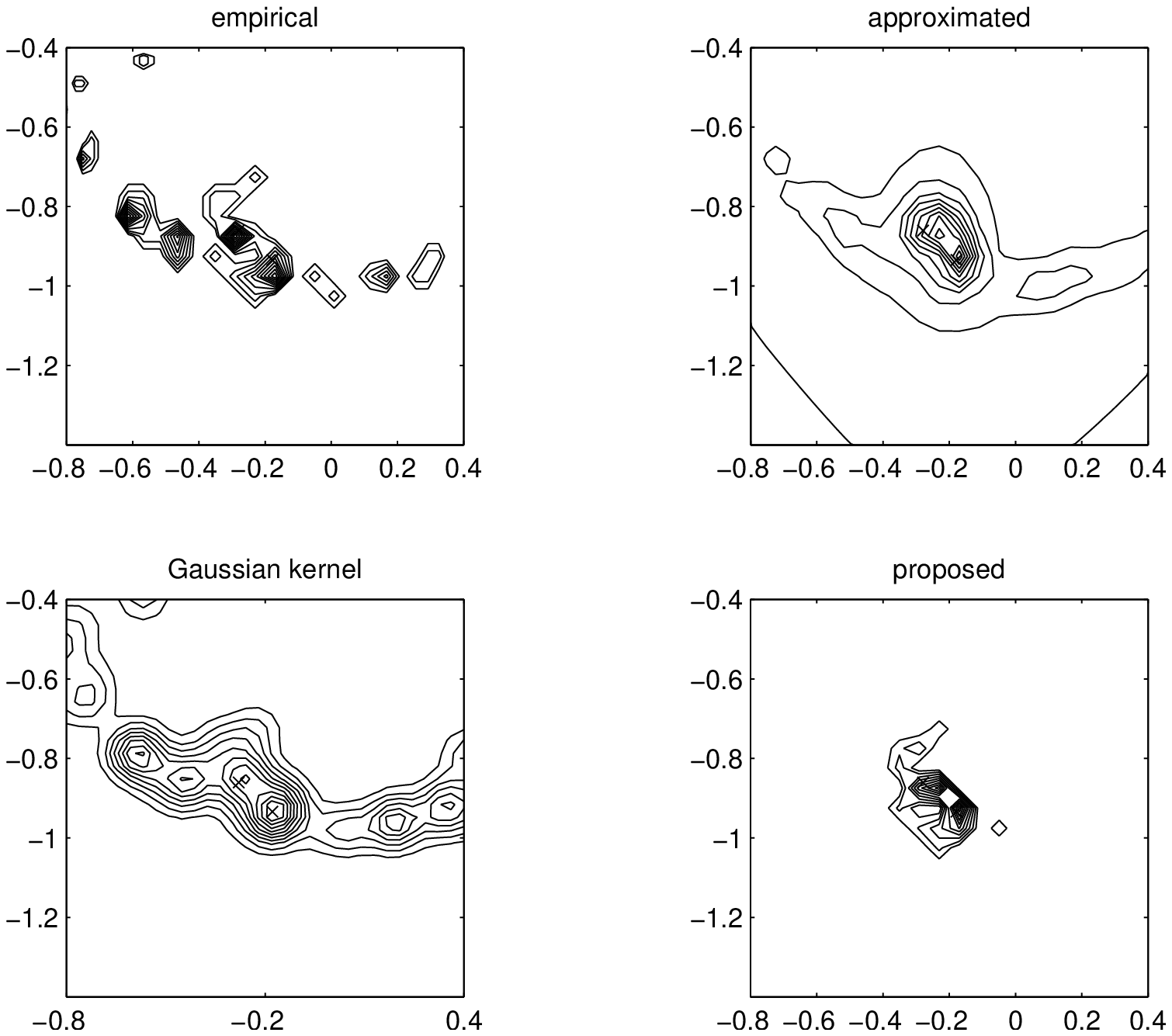}
\caption{First region, $\sigma=1$. The empirical density (top left),  the pilot density computed by the closed form approximation method (top right),  the Gaussian kernel estimate (bottom left) and the result obtained by the proposed method (bottom right)}
\label{fig1}
\end{figure}

\begin{figure}[H]
\hspace*{-0.5in}
\includegraphics[totalheight=6.in]{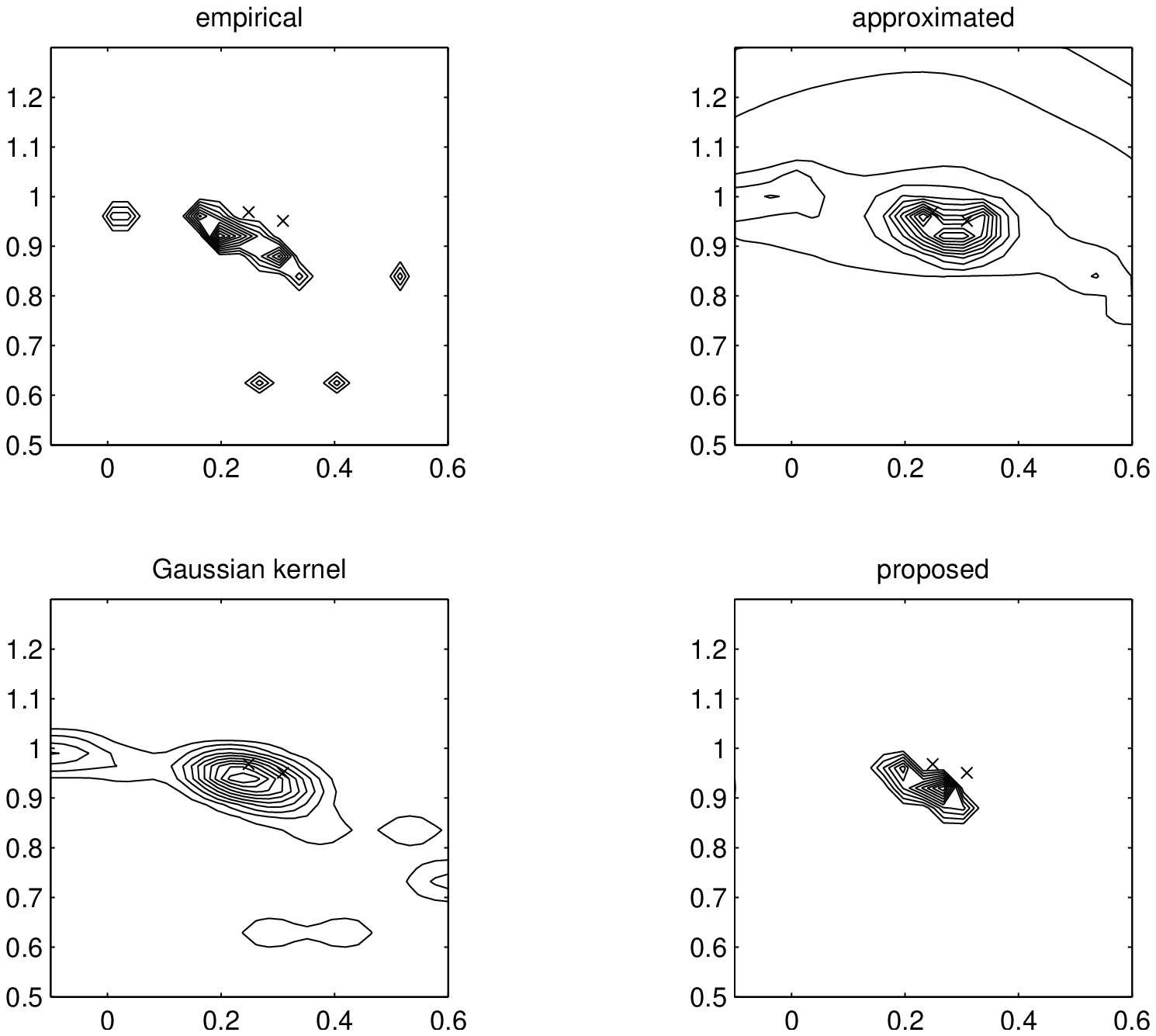}
\caption{Second region, $\sigma=1$. The empirical density (top left),  the pilot density computed by the closed form approximation method (top right),  the Gaussian kernel estimate (bottom left) and the result obtained by the proposed method (bottom right)}
\label{fig2}
\end{figure}

\begin{figure}[H]
\hspace*{-0.5in}
\includegraphics[totalheight=6.in]{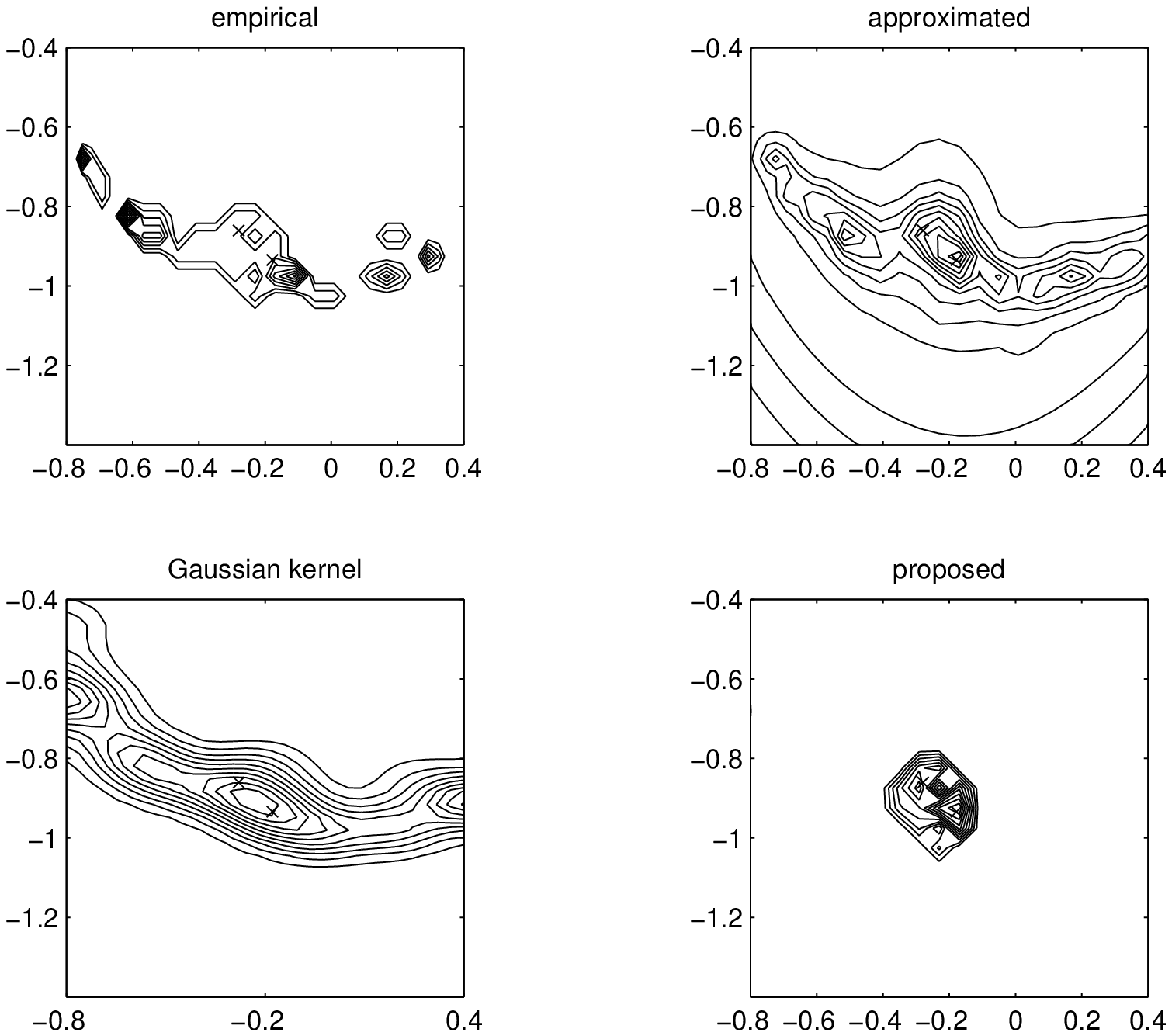}
\caption{First region, $\sigma=3$. The empirical density (top left),  the pilot density computed by the closed form approximation method (top right),  the Gaussian kernel estimate (bottom left) and the result obtained by the proposed method (bottom right)}
\label{fig3}
\end{figure}

\begin{figure}[H]
\hspace*{-0.5in}
\includegraphics[totalheight=6.in]{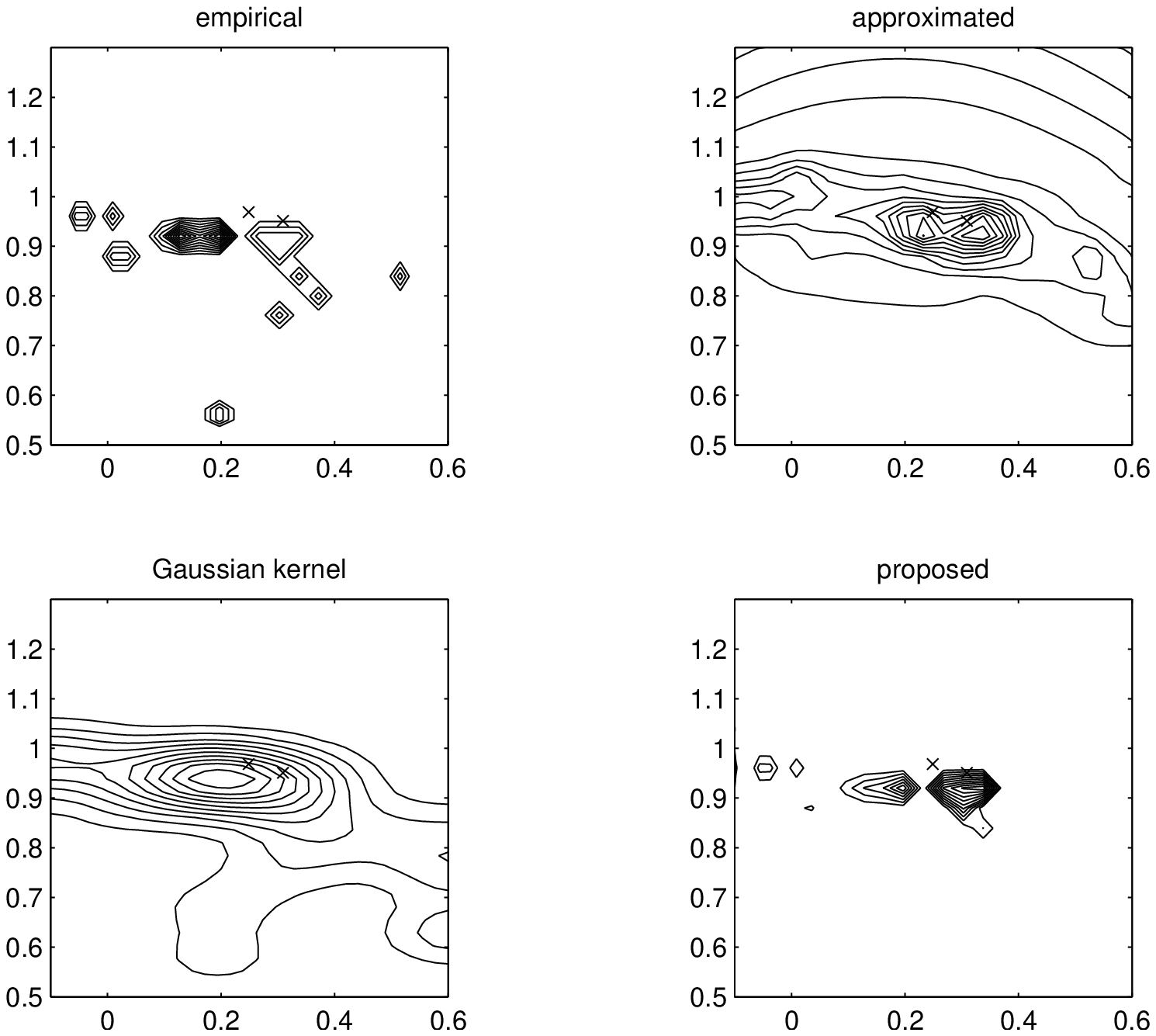}
\caption{Second region, $\sigma=3$. The empirical density (top left),  the pilot density computed by the closed form approximation method (top right),  the Gaussian kernel estimate (bottom left) and the result obtained by the proposed method (bottom right)}
\label{fig4}
\end{figure}

\section*{Appendix}

\subsection*{Proof of Lemma 1.1}

 Let us find $\umu_j \in \C^{n/2}$, $Q_j \in \C^{n/2 \times n/2}$ and $\nu_j\in\C$ such that
$$
\sum_{k=0}^{n-1}\left|\sum_{h\ne j}^{1,n/2} \gamma_h\zeta_h^{k}+\gamma_j
z^{k}-s_k\right|^2=(\ugam-\umu_j)^HQ_j(\ugam-\umu_j)+\nu_j$$
We have
$$
\sum_{k=0}^{n-1}\left|\sum_{h\ne j}^{1,n/2} \gamma_h\zeta_h^{k}+\gamma_j
z^{k}-s_k\right|^2=\sum_{k=0}^{n-1}\left|\sum_{h}^{1,n/2}\gamma_h x_{hk}^{(j)}-s_k\right|^2.$$
Choosing  $\umu_j, Q_j$ an $\nu_j$ as defined in the text of the Lemma
we get the result.

\subsection*{Proof of Lemma 1.3}

  From Lemma \ref{lem1} we have
  $$h_2^{(1)}(z,\sigma)= \frac{1}{(\pi \sigma^2)^{2}}\int_{\C}
\gamma e^{-\frac{1}{\sigma^2}[(\gamma-\mu_1)^HQ_1(\gamma-\mu_1)+\nu_1]
} d\gamma$$
  where $X_1=[1\;\; \overline{z}],\;\;Q_1=1+|z^2|,\;\;\mu_1=\frac{s_0+\overline{z}s_1}{1+|z^2|},\;\;\nu_1=|s_0|^2+|s_1|^2-\frac{|s_0+\overline{z}s_1|^2}{1+|z^2|}$,
  therefore
 $$h_2^{(1)}(z,\sigma)= \frac{1}{(\pi \sigma^2)^{2}}e^{-\frac{1}{\sigma^2}\nu_1}\int_{\C}
\gamma e^{-\frac{(\gamma-\mu_1)^H(\gamma-\mu_1)(1+|z^2|)}{\sigma^2}
} d\gamma=$$ $$\frac{1}{(\pi \sigma^2)^{2}}e^{-\frac{1}{\sigma^2}\nu_1}\int_{\R^2}
|\gamma|^2 e^{-\frac{(\gamma-\mu_1)^H(\gamma-\mu_1)(1+|z^2|)}{\sigma^2}
} d\Re{\gamma}d\Im{\gamma}=$$
$$\frac{1}{(\pi \sigma^2)^{2}}e^{-\frac{1}{\sigma^2}\nu_1}\frac{\pi \sigma^2}{1+|z|^2}\left(|\mu_1|^2+\frac{\sigma^2}{1+|z|^2}\right).$$
If $\xi=\frac{s_1}{s_0}$ then
$$\nu_1=|s_0|^2\frac{|z-\xi|^2}{1+|z|^2},\;\;\mbox{and}\;\;|\mu_1|^2=|s_0|^2\frac{|1+\overline{z}\xi|^2}{(1+|z|^2)^2}$$
and
$$h_2^{(1)}(z,\sigma)=e^{-\frac{|s_0|^2}{\sigma^2}\frac{|z-\xi|^2}{1+|z|^2}}\left(\frac{|s_0|^2|1+\overline{z}\xi|^2}{\pi \sigma^2 (1+|z|^2)^3} + \frac{1}{\pi(1+|z|^2)^2}  \right).$$
But $s_0=c_1$ and $s_1=c_1\xi_1$ hence $\xi_1=\xi$ and we get the thesis.

\subsection*{Proof of Lemma 1.4}

 By considering the vector $\tilde{\umu}_j$ obtained by stacking the real and imaginary parts of  and $\umu$ , and the real isomorph
$\tilde{Q}_j$
of the matrix $Q_j$, and remembering that the Jacobian with respect to the real and imaginary part of a complex variable is $J_R=|J_C|^2$, we get from Lemma \ref{lem1} for $n>2$
\bb && h_n^{(j)}(z,\sigma)=\frac{1}{(\pi\sigma^2)^{n}}\int_{\R^{n-2}}\int_{\R^{n}}
|J_C^*|^2e^{-\frac{1}{\sigma^2}[(\tilde{\ugam}-\tilde{\umu}_j)^H\tilde{Q}_j(\tilde{\ugam}-\tilde{\umu}_j)+\nu_j]}d \tilde{\ugam }
d\Re{\uzet^{(j)}}d\Im{\uzet^{(j)}}= \nonumber\\&&
\frac{1}{(\pi\sigma^2)^{n}}\int_{\R^{n-2}}\int_{\R^{n}}
\prod_{ j=1}^{1,n/2}|\gamma_j|^2\prod_{r<h;r,h\ne
j}|\zeta_r-\zeta_h|^8\prod_{r\ne j}|\zeta_r-z|^8e^{-\frac{1}{\sigma^2}[(\tilde{\ugam}-\tilde{\umu}_j)^H\tilde{Q}_j(\tilde{\ugam}-\tilde{\umu}_j)+\nu_j]}d \tilde{\ugam }
d\Re{\uzet^{(j)}}d\Im{\uzet^{(j)}}.\nonumber
\be
By defining
$$f_j(\uzet^{(j)},z,\sigma)=\frac{1}{(\pi\sigma^2)^{n/2}|\tilde{Q}_j^{-1}|^{\frac{1}{2}}}\int_{\R^{n}}\left( \prod_{ i=1}^{1,n/2} \tilde{\ugam}^T A_i \tilde{\ugam}\right)
e^{-\frac{1}{\sigma^2}(\tilde{\ugam}-\tilde{\umu}_j)^H\tilde{Q}_j(\tilde{\ugam}-\tilde{\umu}_j)} d\tilde{\ugam}  $$
and noticing that
$$|\gamma_i |^2= \tilde{\ugam}^T A_i\tilde{\ugam}$$
we have
$$h_n^{(j)}(z,\sigma)=\int_{\R^{n-2}}f_j(\uzet^{(j)},z)\frac{\prod_{r<h;r,h\ne j}|\zeta_r-\zeta_h|^8\prod_{r\ne j}|\zeta_r-z|^8 }{(\pi\sigma^2)^{n/2}|\tilde{Q}_j(\uzet^{(j)},z)|^{\frac{1}{2}}}e^{-\frac{1}{\sigma^2}\nu_j}
d\Re{\uzet^{(j)}}d\Im{\uzet^{(j)}}$$
and the thesis follows.

\subsection*{Proof of Lemma 1.5}

By considering the normalized vectors $\uy_j= \Sigma_j^{-\frac{1}{2}}\tilde{\ugam}$, we have
$\uy_j \sim N\left(\um_j,I\right)$ where $$\um_j=\Sigma_j^{-\frac{1}{2}}\tilde{\umu}_j=\frac{\sqrt{2}}{\sigma}\tilde{Q}_j^{1/2}\tilde{Q}_j^{-1}\tilde{\uq}_j=
\frac{\sqrt{2}}{\sigma}\tilde{Q}_j^{-1/2}\tilde{\uq}_j=\frac{\sqrt{2}}{\sigma D_j^{1/2}}\hat{Q}_j^{1/2}\tilde{\uq}_j$$ where $D_j=\det(\tilde{Q}_j)$ and $\hat{Q}_j=adj(\tilde{Q}_j)$.
Moreover
$$\tilde{\ugam}^H A_i \tilde{\ugam}=(\Sigma_j^{-\frac{1}{2}}\tilde{\ugam})^T \Sigma_j^{\frac{1}{2}} A_i \Sigma_j^{\frac{1}{2}} \Sigma_j^{-\frac{1}{2}}\tilde{\ugam}=\uy_j^T \Sigma_j^{\frac{1}{2}} A_i \Sigma_j^{\frac{1}{2}} \uy_j=\uy_j^TB_i \uy_j$$
where $$B_i=\Sigma_j^{\frac{1}{2}} A_i \Sigma_j^{\frac{1}{2 }}=\frac{\sigma^2}{2}\tilde{Q}_j^{-1/2}A_i\tilde{Q}_j^{-1/2}=\frac{\sigma^2}{2D_j}\hat{Q}_j^{1/2}A_i\hat{Q}_j^{1/2}.$$
From \cite[Th.1]{bu}, denoting by ${\cal Q}_i$ the quadratic form $\uy_j^TB_i\uy_j$, we have the recursion
$$f_j(\uzet^{(j)},z,\sigma)=
E\left[\prod_{ i=1}^{1,n/2}{\cal Q}_i\right]=\sum_{i=0}^{n/2-1}2^i\sum_{j_1=2}^{n/2}\dots\sum_{j_i=2}^{n/2}
\left(g_{j_1\dots j_i}E\left[\frac{{\cal Q}_2\dots{\cal Q}_{n/2}}{{\cal Q}_{j_1}\dots{\cal Q}_{j_i}} \right]\right)$$
where  for $i=0$, $g=\um^TB_1\um+tr(B_1)=E[{\cal Q}_1]$ and for $i>0$, $j_1\ne j_2\ne\dots\ne j_i$ and
$$g_{j_1\dots j_i}= \um^T(B_1B_{j_1}\dots B_{j_i}+B_{j_1}B_1B_{j_2}\dots B_{j_i}+\dots + B_{j_1}B_{j_2}\dots B_{j_i}B_1)\um+tr(B_1B_{j_1}\dots B_{j_i}). $$
But then
$$g_{j_1\dots j_i}=\frac{\sigma^{2(i-1)}}{2^{i-1}D_j^{i+1}}\tilde{\uq}_j^T{\cal A}\tilde{\uq}_j+
\frac{\sigma^{2i}}{2^iD_j^i}tr(A_1\hat{Q}_j
A_{j_1}\hat{Q}_j\dots A_{j_i}\hat{Q}_j)$$
$$g_{j_1\dots j_i}=\frac{\sigma^{2i}}{2^iD_j^i}\left(\frac{2}{\sigma^2 D_j}\hat{Q}_j^{1/2}\tilde{\uq}_j^T{\cal A}\tilde{\uq}_j+
tr(A_1\hat{Q}_j
A_{j_1}\hat{Q}_j\dots A_{j_i}\hat{Q}_j)\right)$$
where $${\cal A}=\hat{Q}_jA_1\hat{Q}_j
A_{j_1}\hat{Q}_j\dots \hat{Q}_jA_{j_i}\hat{Q}_j+
\hat{Q}_jA_{j_1}\hat{Q}_jA_1\hat{Q}_jA_{j_2}\hat{Q}_j\dots A_{j_i}\hat{Q}_j+\dots +
\hat{Q}_jA_{j_1}\hat{Q}_j\dots A_{j_i}\hat{Q}_jA_1\hat{Q}_j.$$
We have
$$E[{\cal Q}_1]= \frac{1}{D_j^2}\tilde{\uq}_j^T\hat{Q}_jA_1\hat{Q}_j\tilde{\uq}_j+\frac{\sigma^2}{2D_j}tr(A_1\hat{Q}_j)$$
$$E[{\cal Q}_1{\cal Q}_2]= E[{\cal Q}_1]E[{\cal Q}_2] +4 \um_j^TB_1B_2\um_j+2tr(B_1B_2)=$$
$$\frac{1}{D_j^4}\tilde{\uq}_j^T\hat{Q}_jA_1\hat{Q}_j\tilde{\uq}_j\tilde{\uq}_j^T\hat{Q}_jA_2\hat{Q}_j\tilde{\uq}_j+$$
$$\frac{\sigma^2}{2D_j^3}\left(\tilde{\uq}_j^T\hat{Q}_jA_1\hat{Q}_j\tilde{\uq}_jtr(A_2\hat{Q}_j)+\tilde{\uq}_j^T\hat{Q}_jA_2\hat{Q}_j\tilde{\uq}_jtr(A_1\hat{Q}_j)+\tilde{\uq}_j^T\hat{Q}_jA_i\hat{Q}_jA_i\hat{Q}_j\tilde{\uq}_j\right)+$$
$$\frac{\sigma^4}{4D_j^2}\left(tr(A_1\hat{Q}_j)tr(A_2\hat{Q}_j)+tr(A_1\hat{Q}_jA_2\hat{Q}_j)\right)$$
and in general
$$f_j(\uzet^{(j)},z,\sigma)=P_{n/2}(\sigma^{2},z)=\sum_{h=0}^{n/2}\beta_h(\uzet^{(j)},z)\frac{\sigma^{2h}}{2^h D_j^{n-h}}$$
where $\beta_h(\uzet^{(j)},z)$ are positive polynomials. Moreover $\beta_{n/2}(\uzet^{(j)},z)$ is the only coefficient that does not depend on $\tilde{\uq}_j$ and therefore it  does not depend on $\us$.

\subsection*{Proof of Corollary 1.6}

By Lemma \ref{lem2}
$$h_n^{(j)}(z,\sigma)=\int_{\R^{n-2}}f_j(\uzet^{(j)},z)\frac{\prod_{r<h;r,h\ne j}|\zeta_r-\zeta_h|^8\prod_{r\ne j}|\zeta_r-z|^8 }{(\pi\sigma^2)^{n/2}|\tilde{Q}_j(\uzet^{(j)},z)|^{\frac{1}{2}}}e^{-\frac{1}{\sigma^2}\nu_j}
d\Re{\uzet^{(j)}}d\Im{\uzet^{(j)}}=$$
$$\frac{1}{\sigma^n\pi^{n/2}}\sum_{k=0}^{n/2}\frac{\sigma^{2k}}{2^k}\int_{\R^{n-2}}\beta_k(\uzet^{(j)},z)\frac{\prod_{r<h;r,h\ne j}|\zeta_r-\zeta_h|^8\prod_{r\ne j}|\zeta_r-z|^8 }{D_j^{n-k+\frac{1}{2}}}e^{-\frac{1}{\sigma^2}\nu_j}
d\Re{\uzet^{(j)}}d\Im{\uzet^{(j)}}.$$
Noting that $|\zeta_r-\zeta_h|^8=|\tzet_r-\tzet_h|^8$ and $|\zeta_r-z|^8=|\tzet_r-\tz|^8$, we get  the thesis.

\subsection*{Proof of Corollary 1.7}

Noticing that in the definition of $g_j(\tilde{\uzet}^{(j)},\tz,\sigma)$ when $\sigma\downarrow\infty$ all terms vanish but the last one, we get the first part of the thesis.
By Lemma \ref{lemf} $\beta_{n/2}(\tilde{\uzet}^{(j)},\tz)$ is the only coefficient which does not depend on  $\us$. Therefore when $\us=\underline{0}$ equation \ref{eqdc} reduces to  $ h_n^{(j)}(\tz,\infty)$. Finally by symmetry the condensed density does not depend on $j$, therefore all $h_n^{(j)}(\tz,\infty)$ must be equal. Moreover in \cite{barja} it was proved that, when $\us=\underline{0}$, it depends only on $|\tz|^2$.

\subsection*{Proof of Theorem 2.1}

We recall that if $$I=\int_{\uy\in {\cal D}}K(\uy)e^{-\lambda\nu(\uy)}d\uy,\;\;{\cal D}\mbox{ open set }\subset\R^d,\;\;\lambda\in \R^+$$
and $\nu(\uy)$ has a unique minimum in $\overline{{\cal D}}$ and this minimum occurs at a stationary point $\hat{\uy}$ of $\nu(\uy)$, then the Laplace's approximation to $I$ is given by
$$\tilde{I}=(2 \pi)^{\frac{d}{2}}\lambda^{-\frac{d}{2}}|H(\hat{\uy})|^{-\frac{1}{2}}K(\hat{\uy})e^{-\lambda\nu(\hat{\uy})}$$
where $H(\uy)$ is the Hessian of $\nu$.

\noindent We know that $\nu_j\ge 0$ and, by Lemma \ref{cor1}, $\uxi$ is the only vector such that $\nu_j(\uxi)=0$. Therefore by continuity, $\nu_j(\tilde{\uzet}^{(j)},\xi_j)$  has a unique minimum $\hat{\uzet}_j$  in a neighbor ${\cal D}_j$ of $\{\xi_h,\;h\ne j\}$.
Moreover from  Corollary \ref{lem3} we notice that the dominant term in the sum defining $g_j(\tilde{\uzet}^{(j)},\tz,\sigma)$ when $\sigma\downarrow 0$ is the first one, therefore
in this case $$g_j(\tilde{\uzet}^{(j)},\tz,\sigma)\approx\frac{1}{\sigma^{n}\pi^{n/2}}\beta_0(\tilde{\uzet}^{(j)},\tz)\frac{\prod_{r<h;r,h\ne j}|\tzet_r-\tzet_h|^8\prod_{r\ne j}|\tzet_r-\tz|^8 }{D_j^{n+\frac{1}{2}}(\tilde{\uzet}^{(j)},\tz)}=\frac{1}{\sigma^n}K_j(\tilde{\uzet}^{(j)},\tz).$$
Then by using Laplace's approximation with $\lambda=\frac{1}{\sigma^2}$ and $d=n-2$, we have, for $\tz\in {\cal N}_j$ where ${\cal N}_j$ is a neighbor of $\xi_j$
\bb \hat{h}_n^{(j)}(\tz,\sigma)= \frac{1}{\sigma^n}\int_{D_j} K_j(\tilde{\uzet}^{(j)},\tz) e^{-\frac{1}{\sigma^2}\nu_j(\tilde{\uzet}^{(j)},\tz)} d\tilde{\uzet}^{(j)}\nonumber \\ \approx \sigma^{-2}(2 \pi)^{\frac{n}{2}-1} |H_j(\hat{\uzet}_j,z)|^{-\frac{1}{2}}K_j(\hat{\uzet}_j,\tz)e^{-\frac{1}{\sigma^2}\nu_j(\hat{\uzet_j},\tz)}
\be
where $H_j$ is the Hessian of $\nu_j$. For simplicity we will denote this approximation by the same symbol $\hat{h}_n^{(j)}(z,\sigma)$.
Let us define $$G_j(\hat{\uzet}^{(j)},\tz)=(2 \pi)^{\frac{n}{2}-1} |H_j(\hat{\uzet}_j,\tz)|^{-\frac{1}{2}}K_j(\hat{\uzet}_j,\tz)$$ then
\bb \hat{h}_n^{(j)}(\tz,\sigma)=\sigma^{-2}G_j(\hat{\uzet}^{(j)},\tz)e^{-\frac{1}{\sigma^2}\nu_j(\hat{\uzet}^{(j)},\tz)}.\be

\subsection*{Proof of Lemma 3.1}

   $$\frac{\partial {\cal D} (h,p)}{\partial t}=\int_{\R^2} p(x,y)\frac{\partial}{\partial t}\left[\Psi\left(\frac{h(x,y,t)}{p(x,y)}\right)\right]dxdy=$$
   $$ \int_{\R^2}\Psi'\left(\frac{h(x,y,t)}{p(x,y)}\right) h_t(x,y)dxdy=
   \int_{\R^2}\Psi'\left(\frac{h(x,y,t)}{p(x,y)}\right)\mbox{div}\left[a(x,y)\nabla\left(\frac{h(x,y,t)}{p(x,y)}\right)\right] dxdy.$$
   Integrating by parts we get
   $$\frac{\partial {\cal D} (h,p)}{\partial t}=\int_{\R^2}\mbox{div}\left[\Psi'\left(\frac{h(x,y,t)}{p(x,y)}\right)a(x,y)\nabla\left(\frac{h(x,y,t)}{p(x,y)}\right)\right] dxdy - $$ $$ \int_{\R^2}\nabla\left[\Psi'\left(\frac{h(x,y,t)}{p(x,y)}\right)\right]\cdot a(x,y)\nabla{\left(\frac{h(x,y,t)}{p(x,y)}\right)}dxdy,$$ where $\cdot$ denotes scalar product.
   By the divergence theorem the first integral is zero because $\Psi'(1)=0$. Moreover
   $$\nabla\left[\Psi'\left(\frac{h(x,y,t)}{p(x,y)}\right)\right]= \psi
   ''\left(\frac{h(x,y,t)}{p(x,y)}\right)\nabla{\left(\frac{h(x,y,t)}{p(x,y)}\right)}.$$

\subsection*{Proof of Theorem 4.1}

$$\frac{\partial\E_{h^{(j)}}\{\bh^{(j)}(z,t)\}}{\partial t}=\int_{\C}\frac{\partial \Phi_j}{\partial t}(z,\zeta;t)h^{(j)}(\zeta)d\zeta=\int_{\C}L^*[\Phi_j(z,\zeta;t)]h^{(j)}(\zeta)d\zeta.$$
By  definition of  adjoint operator, taking into account that  $\lim_{z\rightarrow \infty}\Phi_j(z,\zeta;t)=0$ we get for $t\downarrow 0$
$$\frac{\partial\E_{h^{(j)}}\{\bh^{(j)}(z,t)\}}{\partial t}=\int_{\C}L^*[\Phi_j(z,\zeta;t)]h^{(j)}(\zeta)d\zeta=\int_{\C}\Phi_j(z,\zeta;t)L[h^{(j)}(\zeta)]d\zeta.$$
But $\lim_{t\rightarrow 0}\Phi_j(z,\zeta;t)=\delta(z-\zeta)$ hence
$$\int_{\C}\Phi_j(z,\zeta;t)L[h^{(j)}(\zeta)]d\zeta\approx L[h^{(j)}(z)],\;\;t\downarrow 0.$$
By considering the first order Taylor series approximation of $\E_{h^{(j)}}\{\bh^{(j)}(z,t)\}$ we get
$$\E_{h^{(j)}}\{\bh^{(j)}(z,t)\}=\E_{h^{(j)}}\{\bh^{(j)}(z,0)\}+t \frac{\partial\E_{h^{(j)}}\{\bh^{(j)}(z,t)\}}{\partial t}|_{t=0}+O(t^2),$$
but
$$\E_{h^{(j)}}\{\bh^{(j)}(z,0)\}=\int_{\C}\Phi_j(z,\zeta;0)h^{(j)}(\zeta)d\zeta=\int_{\C}\delta(z-\zeta)h^{(j)}(\zeta)d\zeta=h^{(j)}(z)$$
hence
$$\E_{h^{(j)}}\{\bh^{(j)}(z,t)\}=h^{(j)}(z)+t L[h^{(j)}(z)]+O(t^2)$$
and
$$\int_{\C}[\E_{h^{(j)}}\{\bh^{(j)}(z,t)\}-h^{(j)}(z)]^2dz\approx\int_{\C}[t L[h^{(j)}(z)]]^2dz=t^2\| L[h^{(j)}]\|^2$$

\noindent For approximating the integrated variance let us consider first the second moment
$$\E_{h^{(j)}}\{\Phi_j(z,\zeta;t)^2\}=\int_{\C}\Phi_j(z,\zeta;t)^2h^{(j)}(\zeta)d\zeta=
 \int_{\C}t^{-2}G_j(z,\zeta)^2e^{-\frac{2}{t}\nu_j(z,\zeta)}h^{(j)}(\zeta)d\zeta$$
But it was proved in \cite[Th.2]{barja2} that
$$\lim_{t\rightarrow 0}\frac{2}{t}G_j(z,\zeta)e^{-\frac{2}{t}\nu_j(z,\zeta)}=\delta(z-\zeta)$$ hence, for $t\downarrow 0$,
$$\E_{h^{(j)}}\{\Phi_j(z,\zeta;t)^2\}\approx\frac{1}{2}\int_{\C}t^{-1}G_j(z,\zeta) \delta(z-\zeta)h^{(j)}(\zeta)d\zeta=\frac{1}{2}t^{-1}G_j(z,z)h^{(j)}(z).$$
 As $\bzet_j(r)$ are independent $\forall r.$, it follows that
$$Var_h[\bh^{(j)}(z,t)]=Var_h\left[\frac{1}{R}\sum_{r=1}^R\Phi_j(x,y,\bzet_j(r);t)\right] =\frac{1}{R^2}\sum_{r=1}^R Var_h\left[\Phi_j(x,y,\bzet_j(r);t)\right] =$$
$$\frac{1}{R}\E_{h^{(j)}}[\Phi_j(x,y,\bzet_j(r);t)^2]-\frac{1}{R}(\E_{h^{(j)}}[\Phi_j(x,y,\bzet_j(r);t)])^2\approx $$ $$\frac{1}{2Rt}G_j(z,z)h^{(j)}(z)-(h^{(j)}(z)+t L[h^{(j)}(z)])^2
\approx\frac{1}{2Rt}G_j(z,z)h^{(j)}(z)$$
because for $t\downarrow 0$ the second term is negligible w.r. to to the first one and
$$\int_{\C}Var_h[\bh^{(j)}(z,t)]dz\approx\frac{1}{2Rt}\int_{\C}G_j(z,z)h^{(j)}(z)dz=\frac{1}{2Rt}\E_{h^{(j)}}[G_j].$$

\end{document}